\documentclass[11pt]{article}
\usepackage[top=2.8cm, bottom=2.8cm, left=2.8cm, right=2.8cm]{geometry}
\usepackage{fancyhdr}
\pagestyle{fancy}
\fancyhf{}  
\fancyhead[L]{Page \thepage}      
\fancyhead[R]{J. M. Fraser \& Y. Xu}    


\renewcommand{\geq}{\geqslant}
\renewcommand{\leq}{\leqslant}
\renewcommand{\emptyset}{\varnothing}

\fancyfoot[C]{}
\renewenvironment{abstract}
{\noindent\textbf{Abstract:}\ignorespaces} 
{\par} 
\usepackage{graphicx} 
\usepackage{subcaption}   
\usepackage{amsfonts}
\usepackage{amsmath,verbatim}
\allowdisplaybreaks
\usepackage{cite}
\usepackage{amsthm}
\newtheorem{theorem}{Theorem}[section]
\newtheorem{definition}[theorem]{Definition}
\newtheorem{lemma}[theorem]{Lemma}
\newtheorem{corollary}[theorem]{Corollary}
\newtheorem{proposition}[theorem]{Proposition}

\numberwithin{equation}{section}
\theoremstyle{remark}
\usepackage{listings}
\usepackage{hyperref}
\hypersetup{
	colorlinks = true,
	citecolor  = red,
	linkcolor  = red,
	urlcolor   = magenta
}
\bibliographystyle{plainnat} 
\usepackage{xcolor} 
\usepackage{amssymb}
\usepackage{titlesec}
\titleformat{\section}[block]{\centering}{\thesection}{1em}{}
\titleformat{\subsection}[block] 
{\normalfont\bfseries} 
{\thesubsection} 
{0.5em} 
{} 
\titlespacing*{\subsection}{0pt}{3ex}{0.5em} 
\makeatletter
\renewcommand{\@maketitle}{%
	\begin{center}
		{\large\bfseries \@title \par} 
		\vspace{1em} 
		{\small \@author \par} 
		\vspace{0.2em} 
		{\footnotesize \@date \par} 
	\end{center}
}
\makeatother
\begin{document}
	\thispagestyle{empty}  
	
	\begin{center}
		{\Large\bfseries Dimensions of orbital sets in complex dynamics \par}
		\vspace{1em}
		{\normalsize Jonathan M. Fraser \& Yunlong Xu\footnote{JMF was  financially supported by   an \emph{EPSRC Open Fellowship} (EP/Z533440/1) and a \emph{Leverhulme Trust Research Project Grant} (RPG-2023-281). YX was financially supported by the Chinese Scholarship Council.} \par}
		\vspace{0.2em}
		{\normalsize University of St Andrews \par}
		\vspace{0.2em}
		{\normalsize \texttt{jmf32@st-andrews.ac.uk \& yx71@st-andrews.ac.uk} \par}
	\end{center}
	
	\vspace{1.5em}
	\begin{abstract}
		~Let $ E $ be a non-empty compact subset of the Riemann sphere and $T$ be a rational map   of degree at least two. We study the associated \emph{orbital set}, that is, the backwards orbit of $E$ under $T$, and study the relationship between the upper box dimension of the   orbital set and the upper box dimensions of the   Julia set of $T$ and   the initial set $ E$.  Our results  extend previous work on inhomogeneous iterated function systems to the setting of complex dynamical systems.\\
		\\
		\noindent	\textit{Mathematics Subject Classification} 2020: ~~primary: 37F10, 28A80 secondary: 30C15, 28A78.\\
		\textit{Key words and phrases}: orbital set, rational map, complex dynamics, upper box dimension, Julia set,  inhomogeneous attractor.
	\end{abstract}
	
	\vspace{1em}
	\tableofcontents
	\section{\textbf{Introduction}}
	
	\subsection{Inhomogeneous attractors, Kleinian orbital sets, and dimension}
	
	Many fractals, including self-similar sets and self-affine sets, are generated by an iterated function system (IFS). The theory of IFSs provides a powerful framework for describing the geometry of fractals and for generating complex and representative fractal examples. In 1985, Barnsley and Demko\cite{ref13} generalised the  IFS model to allow for inhomogeneous attractors, which are generated by taking the closure of the orbit of a fixed compact condensation set under a given standard IFS.  This led to much further work on the topic, with a key problem being to relate geometric properties of the inhomogeneous attractor with the IFS and condensation set, see \cite{olsen, ref20} and the references therein for further examples and more details.  
	
We briefly recall the inhomogeneous IFS model. 	Consider an IFS $ \mathcal{S}=\left\lbrace S_{i} \right\rbrace_{i=1}^{n}  ,$ where each function $ S_{i} $ is a contraction and maps a fixed compact subset $X$ of Euclidean space into itself.  Further, fix a compact condensation set $ C \subseteq X $.  Then there exists a unique non-empty compact \emph{inhomogeneous attractor} $ F_{C} $ such that
	$$F_{C}= \bigcup_{S\in\mathcal{S}}S( F_{C}) \cup C .$$
	When $ C $ is empty, the attractor $ F_{\emptyset} $ is the usual (homogeneous) attractor of the IFS. One can also define the \textit{orbital set}, $ O $, by
	$$O=C\cup \bigcup_{k\in\mathbb{N}}\bigcup_{i_{1},\dots,i_{k}\in\left\lbrace 1,\dots,N\right\rbrace }S_{i_{1}}\circ\dots\circ S_{i_{k}}( C), $$
	and observe that
	$$F_{C}=F_{\emptyset}\cup O=\overline{O},$$ 
	see \cite{olsen}. 	Due to the fractal nature of these attractors, it is natural to study their dimensions and we recall the definitions we  use here, referring the reader to \cite{ref2} for more background.
	\begin{definition}
		Let $ A $ be a non-empty bounded subset of   Euclidean space.
		\\
		(a) For   $ \delta>0 $, let $ N_{\delta}\left( A \right)  $ be the minimal number of open sets of diameter at most $ \delta $ required to cover  $ A. $ The \textit{upper box dimension} and the \textit{lower box dimension} of $ A $ are defined by
		$$\overline{\dim}_{\mathrm{B}}A= \limsup_{\delta\rightarrow 0}\dfrac{\log N_{\delta}( A ) }{-\log \delta}$$
		and
		$$\underline{\dim}_{\mathrm{B}}A= \liminf_{\delta\rightarrow 0}\dfrac{\log N_{\delta}( A ) }{-\log \delta} $$respectively.
		When $ \overline{\dim}_{\mathrm{B}}A $ and $\underline{\dim}_{\mathrm{B}}A  $ coincide, we call the common value the \textit{box dimension} of $ A $ and denote it by $ \dim_{\mathrm{B}}A .$
		\\
		(b) The Hausdorff dimension of A is defined by
		$$\dim_{\mathrm{H}}A= \inf\left\lbrace s \geq 0:
		\forall \varepsilon>0, ~\exists \left\lbrace U_{i} \right\rbrace_{i=1}^{\infty} \text{ such that }  A\subseteq \bigcup_{i=1}^{\infty}U_{i} \text{ and } \sum_{i=0}^{\infty}|U_{i}|^{s} <\varepsilon\right\rbrace . $$
	\end{definition}

	Fraser\cite[Corollary 2.2]{ref6} showed that if an IFS consisting of similarity maps satisfies the   open set condition (OSC), then 
	$$\overline{\dim}_{\mathrm{B}}F_{C} =\max\left\lbrace\overline{\dim}_{\mathrm{B}}C, ~\overline{\dim}_{\mathrm{B}}F_{\emptyset} \right\rbrace  .$$ 
	See also \cite{ref18, kaenmaki1, kaenmaki2,olsen} for further results in this direction. 	The formula above may be thought of as the  \textit{expected formula}. Indeed, it is very straightforward to see that	$$\dim_{\mathrm{H}}F_{C} =\max\left\lbrace\dim_{\mathrm{H}}C, ~\dim_{\mathrm{H}}F_{\emptyset} \right\rbrace  $$
	holds without assuming any separation conditions and without any assumptions on the maps in the IFS.    However, in \cite{ref18}, Baker, Fraser, and Máthé demonstrated that the  expected formula does \emph{not} hold in general for upper box dimension when overlaps occur, that is,  when the OSC is not assumed.  Further, the expected formula can fail for \emph{lower} box dimension even for IFSs consisting of similarity maps and satisfying  the OSC \cite{ref6, rutar}. For inhomogeneous self-\emph{affine} sets,  Fraser observed that the expected formula for the upper box dimension may fail, even without overlaps \cite{ref19}. Later, Burrell and Fraser \cite{ref17} showed that  the expected formula holds for self-affine sets in an appropriate generic sense.

It is natural to consider inhomogeneous attractors and orbital sets in more general dynamical settings.  To this end, in 2023 Bartlett and Fraser \cite{ref5} considered orbital sets generated by Kleinian group actions.  These orbital sets are reminiscent of the art of M.C.~Escher who produced intricate pictures generated by the orbit of simple images of lizards and fish etc under a Fuchsian group.  In a certain sense this makes Escher's work some of  the original orbital sets, and highlights the relevance of this model in general dynamical settings. 

We briefly recall the results from \cite{ref5}.  Let $ E $ be a non-empty subset of the Poincar\'{e} ball  $ \mathbb{D}^{n} $ and $\Gamma$ be a Kleinian group acting on $ \mathbb{D}^{n} $. The orbital set $O _{\Gamma}(E) $ of $E$ under $ \Gamma $ is defined as the orbit of  $ E $ under the action of $\Gamma$, that is, 
	$$O _{\Gamma}(E)  =\bigcup_{g\in \Gamma}g(E). $$ 
	It was proved in \cite[Theorem 2.1]{ref5}  that if $ E $ is bounded in the hyperbolic metric, then
	$$\overline{\dim}_{\mathrm{B}}O _{\Gamma}(E)  =\max\left\lbrace\overline{\dim}_{\mathrm{B}}E, ~\overline{\dim}_{\mathrm{B}}L( \Gamma) ,~\delta(\Gamma)  \right\rbrace ,$$
	where $ L\left( \Gamma\right)  $ is the usual  limit set of $ \Gamma $  and $\delta\left( \Gamma\right)   $ is the Poincaré exponent of $ \Gamma $. Notably, this result does not require any assumption on $\Gamma$ and all three terms appearing in the maximum are necessary. However,  they constructed an example showing that this result may fail if $ E $ is unbounded, see  \cite[Theorem 2.2]{ref5}.

	\subsection{Rational maps}
	
	Motivated by the orbital sets in the context  of IFSs and Kleinian group actions, we turn our attention to complex dynamics. In this section we briefly recall the relevant dynamical concepts, but the orbital sets are defined in the following section. We refer the reader to \cite{ref1, mcmullen} for more background on rational maps in general.
	
	A rational map $ T $ of a complex variable $ z $ is a map  of the form  
	$$T( z) =\dfrac{P( z) }{Q( z) }, $$
	where $ P( z)  $ and $ Q( z)  $  are non-trivial coprime polynomials. We define the \textit{degree} of a rational map $ T   $ by 
	$$\deg T =\max \left\lbrace \deg  P  ,\deg  Q  \right\rbrace .$$
	In order to avoid degenerate cases, throughout this paper we always assume that $ \deg T=d \geq2 .$  
	
Rational maps act naturally on the Riemann sphere $ \mathbb{C}_{\infty} :=\mathbb{C} \cup \{\infty\}$ but by standard reductions we will see later that it is safe to assume everything happens in the complex plane $\mathbb{C}$. 	The maximal open subset of $ \mathbb{C}_{\infty}$  on which the family $ \left\lbrace  T^{n}: n\geq 1\right\rbrace  $ is normal, is called the \textit{Fatou set}, denoted by $ F_{T}  $ or $ F $. The complement of the Fatou set in $ \mathbb{C_{\infty}} $ is called the \textit{Julia set} of $ T ,$ denoted by $ J_{T} $ or $J$. The Julia set is often a beautiful fractal set with interesting dynamical and dimension theoretic properties. In particular, the Julia set is totally invariant, that is, $T(J) = T^{-1}(J) = J$.
	
	We define an equivalence relation on   $ \mathbb{C_{\infty}} $ as follows: for any $ z $ and $ w $ in $ \mathbb{C_{\infty}} $, we write $ z \sim w $ if and only if there exist non-negative integers $ n $ and $ m $ such that $ T^{n}( z) =T^{m}( w)  .$ The equivalence class containing $ z $, denoted by $ [ z]   $, is called the \emph{orbit} of $ z $. If $ [ z] $ is finite, then $ z $ is called an \emph{exceptional point} of $ T. $  According to \cite[Theorem 4.1.2]{ref1}, a rational map of degree at least 2 has at most 2 exceptional points.  Furthermore,     every exceptional point lies in the Fatou set, see\cite[Corollary 4.1.3]{ref1}.
	
	Two rational maps $ T $ and $ S $ are said to be conjugate if there exists a M{\"o}bius transformation $ g $ such that $ S=g\circ T\circ g^{-1} $. In this case, the Fatou set and Julia set are preserved under conjugacy, that is, $ F_{S} =g(F_{T})  $ and $ J_{S} =g(J_{T} )   .$ As a consequence, when the Julia set is not the whole Riemann sphere, we may always assume (up to conjugation) that the Julia set $ J $ is bounded in $ \mathbb{C} $. Indeed, from \cite[Theorem 4.3.2]{ref1}, we know that the Julia set $ J $ of a rational map $ T $ equals the whole Riemann sphere $ \mathbb{C_{\infty}} $ if and only if there exists some point $ z $ whose forward orbit $ \left\lbrace T^{n}( z) :n\geq 1\right\rbrace  $ is dense in the Riemann sphere. For example, the Julia set of the rational map $ T(z)=\frac{(z-2)^{2}}{z^{2}} $ is $ \mathbb{C_{\infty}} .$ In fact, there are many functions having the same property, such as the family $ T_{\omega}(z)=1+\frac{\omega}{z^{2}} $, where $ \omega \in \mathbb{C}^{*}= \mathbb{C}\setminus\left\lbrace 0\right\rbrace $; for more details see \cite[\S2.4]{ref10}. Our results trivially hold in the case when the Julia set is the whole of  $ \mathbb{C_{\infty}}$ and so again we may ignore this case throughout.
	
	A point $ z\in \mathbb{C_{\infty}} $ is called a \textit{critical point} of $ T $ if $ T $ fails to be injective in any neighborhood of $ z $. If $ z $ is a critical point of $ T $, then the value $ w=T( z)  $ is called a \textit{critical value} of $ T .$ Let $ \mathrm{Crit}( T) $ denote the set of all critical points of $ T $, and define the \emph{post critical set} of $T$ as 
	$$P\left( T\right)=\overline{\bigcup_{n=1}^{\infty}T^{n}( \mathrm{Crit}( T)) }.$$
The post critical set has a significant influence on the dynamics of $T$ and on the structure of the Julia set.  For example, if $P(T)$ is finite, then we say $T$ is \emph{post critically finite} and this is a particularly simple case.  More general is  when $J_T \cap P(T) = \emptyset$, in which case $T$ is \emph{hyperbolic}.  More general still is the case  when $J_T \cap P(T)$ is finite, in which case we say $T$ is \emph{geometrically finite}.  This includes all hyperbolic $T$ and also \emph{parabolic} $T$, that is, when $J_T$ contains a rationally indifferent periodic point, but no  critical points.

	\section{\textbf{Main results: dimensions of orbital sets for rational maps}}
	In this section, we formally  introduce the orbital set of a rational map and state our main results.
	
	Let $ E \subseteq \mathbb{C_{\infty}} $ be a non-empty  compact set and let $T$ be a rational map. We define the associated \textit{orbital set} $ O_{T}(E)  $   by
	$$ O_{T}(E)=\bigcup_{k=0}^{\infty} T^{-k}( E). $$
	When $ E=\left\lbrace z\right\rbrace  $, it reduces to the backward orbit of $ z ,$ denoted $ O_{T}(z) .$ We assume without loss of generality that the closure of  $O_T(E)$ is not the whole Riemann sphere and we may therefore assume (by conjugating if necessary) that $O_T(E)$ is bounded in $\mathbb{C}$.
	
	If $ z $ is a non-exceptional point of $ T $, then the Julia set $ J $ is contained in the closure of the backward orbit of $ z .$ In particular, if $ z \in J $, then the Julia set $ J $ is the closure of the backward orbit of $ z ,$ see\cite[Theorem 4.2.7]{ref1}. Hence, if $ E\subseteq \mathbb{C_{\infty}} $ is a non-empty compact set containing at least one non-exceptional point, then
	$J\subseteq \overline{O_{T}(E) }.$ As such, the orbital set shares geometric features of both the (dynamically defined)
	Julia set and the (non-dynamically defined) set $ E .$  The orbital set is typically a beautiful fractal set and we have produced several examples below with different choices of rational maps and different  choices of   $E$, including: circles, the Sierpi\'{n}ski triangle and the Vicsek fractal, see Figure \ref{fig:1} and Figure \ref{fig:2}. In particular, it will be easy to verify that the assumptions of Theorem \ref{thm:A} below are satisfied for all of the examples depicted in  Figure \ref{fig:1}.

Our  aim is to understand the dimension theory of the  orbital sets $ O_{T}(E)$, especially the upper box dimension. The following is our main theorem and we defer the proof until subsection \ref{proof:1}.
\begin{theorem}\label{thm:A}
	Let $ T $ be a rational map of degree at least 2 with $J=J_T$ a bounded subset of $\mathbb{C}$  and $ E $ be a non-empty compact subset of   $ \mathbb{C}$ satisfying the following property:\\
	 	(A) There exists a connected open set $U$ such that $E \subseteq U$,   $U \cap P(T) = \emptyset$, and $U \cap J  \neq \emptyset  $.\\
	Then 
	$$\overline{\dim}_{\mathrm{B}}O_{T}(E) =\max\left\lbrace\overline{\dim}_{\mathrm{B}}E, ~\overline{\dim}_{\mathrm{B}}J \right\rbrace. $$
	\end{theorem}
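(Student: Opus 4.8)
\medskip
\noindent\emph{Proof strategy.}
The plan is to prove the two inequalities separately. The lower bound is soft: by the reductions recorded before the statement we may assume $J\neq\mathbb{C}_\infty$ and that $J$ and $\Lambda:=\overline{O_T(E)}$ are compact subsets of $\mathbb{C}$, and it is convenient to measure covering numbers in the spherical metric (comparable to the Euclidean metric on bounded parts of $\mathbb{C}$), so that $T$ has globally bounded derivative. Every exceptional point is a superattracting periodic critical point and hence lies in $P(T)$, whereas $E\subseteq U$ and $U\cap P(T)=\emptyset$; so $E$ contains a non-exceptional point and, by \cite[Theorem 4.2.7]{ref1}, $J\subseteq\Lambda$. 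Since box dimension is monotone and unchanged under taking closures, $\overline{\dim}_{\mathrm{B}}O_T(E)=\overline{\dim}_{\mathrm{B}}\Lambda\ge\max\{\overline{\dim}_{\mathrm{B}}E,\overline{\dim}_{\mathrm{B}}J\}$. The rest of the argument — the upper bound — follows the inhomogeneous IFS scheme of \cite{ref6}, with the inverse branches of $T$ in the role of the IFS.

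For the upper bound put $s:=\max\{\overline{\dim}_{\mathrm{B}}E,\overline{\dim}_{\mathrm{B}}J\}$; we may assume $s<2$, and we fix $t\in(s,2)$ together with $C_0\ge1$ such that $N_\delta(E)\le C_0\delta^{-t}$ and $N_\delta(J)\le C_0\delta^{-t}$ for all sufficiently small $\delta>0$. Cover $E$ by finitely many discs $D_1,\dots,D_m$, and fix concentric discs $\tilde D_j$ with $\overline{D_j}\subseteq\tilde D_j$ and $\overline{\tilde D_j}\subseteq U$. Since $\overline{\tilde D_j}$ is disjoint from $P(T)$, which contains every critical value of every iterate $T^n$, each $T^n$ has exactly $d^n$ univalent inverse branches on $\tilde D_j$, and by Koebe's distortion theorem these have uniformly bounded distortion on $D_j$: writing $x_j$ for the centre of $D_j$ and $\lambda(\phi):=\|\phi'(x_j)\|$ for such a branch $\phi$, one has $\operatorname{diam}\phi(D_j)\asymp\lambda(\phi)$ and $N_\delta(\phi(E\cap D_j))\le N_{c\delta/\lambda(\phi)}(E)$ with $c>0$ absolute. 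Because $T^{-n}(E)\subseteq\bigcup_j\bigcup_\phi\phi(E\cap D_j)$ (union over inverse branches $\phi$ of $T^n$ on $\tilde D_j$), the task reduces to organising the pieces $\phi(E\cap D_j)$, over all branch lengths, into an efficient $\delta$-cover of $O_T(E)$.

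Two facts from complex dynamics drive the estimate, and this is where hypothesis (A) is essential; it is also where the main difficulty lies. First, a Poincar\'e-series bound: for $t>\overline{\dim}_{\mathrm{B}}J$,
$$\sum_{n\ge0}\ \sum_{\phi\text{ of length }n}\lambda(\phi)^t\ =\ \sum_{n\ge0}\ \sum_{y\in T^{-n}(x_j)}\|(T^n)'(y)\|^{-t}\ <\ \infty,$$
which follows from thermodynamic formalism, since $t$ exceeds the hyperbolic dimension $\inf\{u:P(-u\log\|T'\|)\le0\}\le\overline{\dim}_{\mathrm{B}}J$, so the pressure $P(-t\log\|T'\|)$ is negative and a Harnack-type comparison carries the decay of the transfer-operator iterates from Julia points to the non-exceptional point $x_j$. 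Second, a localisation of deep preimages: under (A), since $U$ is connected, meets $J$, and the boundary of every rotation domain, every parabolic cycle and the $\omega$-limit set of every recurrent critical point lies in $P(T)$, the set $E$ is disjoint from $P(T)$, from the closure of every rotation domain, and from every non-repelling cycle; feeding this into Ma\~n\'e's theorem at a point $z_0\in U\cap J$ (which is non-parabolic and not in the $\omega$-limit set of a recurrent critical point precisely because it avoids $P(T)$) together with the blow-up property ($T^N(W)\supseteq J$ once an open $W$ meets $J$) yields that $\operatorname{diam}\phi(D_j)\to0$ uniformly over branches of length $n$ as $n\to\infty$, and that $\bigcup_{n\ge N}T^{-n}(E)$ is contained in any prescribed neighbourhood of $J$ once $N$ is large.

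Finally I would assemble the cover. For small $\delta$, stop each branch the first time $\lambda(\phi)<\delta$; since $T$ has bounded spherical derivative the stopped branches satisfy $\lambda(\phi)\asymp\delta$, and by the Poincar\'e-series bound their total number (over $j$) is $\lesssim\delta^{-t}\sum\lambda(\phi)^t\lesssim\delta^{-t}$, while each stopped piece $\phi(E\cap D_j)$ is covered by $N_{c\delta/\lambda(\phi)}(E)=N_{O(1)}(E)=O(1)$ sets of diameter $\delta$. The portion of $O_T(E)$ not covered by the stopped pieces consists of deeper preimages, which by the localisation fact lie inside a neighbourhood $W_\delta$ of $J$ with $N_\delta(W_\delta)\lesssim N_\delta(J)\lesssim\delta^{-t}$. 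Reconciling these two parts is the delicate step exactly when $T$ is not uniformly hyperbolic — a parabolic cycle produces branches that contract only polynomially, so the corresponding ``petal fingers'' of $O_T(E)$ do not quite fit the stopping bookkeeping — and one handles them by comparing these fingers with the fingers already present in $J$ near the parabolic point, using the Poincar\'e-series bound to bound their count. Altogether $N_\delta(O_T(E))\le N_\delta(W_\delta)+\sum_{\text{stopped}}N_\delta(\phi(E\cap D_j))\lesssim\delta^{-t}$, so $\overline{\dim}_{\mathrm{B}}O_T(E)\le t$; letting $t\downarrow s$ and combining with the lower bound gives the theorem.
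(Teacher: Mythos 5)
Your lower bound and the overall architecture of the upper bound (Koebe distortion applied to inverse branches defined on a neighbourhood of $E$ avoiding $P(T)$, a Poincar\'e-type series with exponent controlled by the Julia set, and a localisation of deep preimages near $J$) match the paper's strategy, but there are genuine gaps at exactly the two points you flag as delicate. First, the convergence of $\sum_{n}\sum_{y\in T^{-n}(x_j)}|(T^n)'(y)|^{-t}$ for $t>\overline{\dim}_{\mathrm{B}}J$ cannot simply be quoted from thermodynamic formalism: the inequality you need is that the tree pressure at the base point $x_j$ is at most the hyperbolic pressure, and this direction of the ``equality of pressures'' requires safety/expansion hypotheses on the base point that you do not verify; without any hyperbolicity assumption a ``Harnack-type comparison'' does not by itself transport transfer-operator decay from $J$ to an arbitrary $x_j$. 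The paper proves exactly this convergence (Proposition \ref{pros4.3}) by a short elementary argument with no thermodynamics: preimages $x\in T^{-k}(z)$ with $|(T^k)'(x)|\in[2^{n},2^{n+1}]$ are pairwise $\gtrsim 2^{-n}$-separated (Koebe one-quarter theorem applied to disjoint inverse-branch images of a fixed disc about $z$, together with a return-time argument to separate preimages at different levels $k$) and lie within $\lesssim 2^{-n}$ of $J$ (Lemma \ref{lem4.1}), so there are at most $\lesssim N_{2^{-n}}(J)$ of them (Lemma \ref{lem4.3}); dyadic summation then gives convergence directly from $t>\overline{\dim}_{\mathrm{B}}J$, using the box-counting function of $J$ itself rather than the hyperbolic dimension.

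Second, your assembly via a stopping time (stop a branch the first time $\lambda(\phi)<\delta$) is incomplete by your own admission: the parabolic ``petal fingers'' are not actually handled, and your localisation fact is only qualitative (deep preimages eventually enter any \emph{fixed} neighbourhood of $J$), which does not yield a $\delta$-dependent neighbourhood $W_\delta$ with $N_\delta(W_\delta)\lesssim N_\delta(J)$. The paper avoids the stopping time entirely: for each $\delta$ it splits the branches into those with $S_{n,j}(E)\subseteq J_\delta$, whose union is covered by $\lesssim\delta^{-s}$ sets, and the rest; for the rest, $S_{n,j}(E)\nsubseteq J_\delta$ forces $\delta/|S'_{n,j}(E)|\lesssim 1$ (Lemma \ref{pros4.2}), so that $N_\delta(S_{n,j}(E))\lesssim N_{\delta/|S'_{n,j}(E)|}(E)\lesssim\delta^{-s}|S'_{n,j}(E)|^{s}$, and one then sums over \emph{all} $n,j$ using the convergent series from Proposition \ref{pros4.3}. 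This dichotomy is what makes non-uniformly hyperbolic (e.g.\ parabolic) maps unproblematic, and it is the piece your proposal is missing.
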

	
	On the way to proving Theorem \ref{thm:A} we are lead to a general dimension estimate for $J$ which may be of interest in its own right, see Proposition \ref{pros4.3}.

	It is worth making a few remarks about our assumptions in Theorem \ref{thm:A}.  First, analogous to the Kleinian case, it is noteworthy that we assume only very mild conditions on $T$.  For example, there is no requirement that $T$ satisfies any hyperbolicity conditions and no requirement that $T$ is geometrically finite. This makes our result rather general.  Second, our assumption (A) can be thought of in a similar way to the assumption that $E$ is bounded in the hyperbolic metric in the Kleinian case.  In particular, both assumptions are of the form: $E$ must be bounded away from regions where the dynamics become more complicated.  These assumptions are not very restrictive and in practice  just mean  that $E$ must avoid certain problematic locations.  In particular, the only case we really cannot handle at all is when $J \subseteq P(T)$, which is possible but relatively rare.  Indeed, if this is not the case, then there exists a simply connected open set $U$ such that $U \cap P(T) = \emptyset$ and $U \cap J  \neq \emptyset  $ and we may position $E$ inside $U$.  Further, if the complement of $P(T)$ is simply connected and $P(T)$ does not contain $J$, then (A) can be simplified to only requiring $E \cap P(T) = \emptyset$.   This will be the case in the geometrically finite setting and also when $P(T)$ is totally disconnected and does not contain $J$. Another case we can handle is when the complement of $P(T)$ has finitely many connected components, each of which intersects $J$. In this case we can handle any $E$ which avoids $P(T)$, possibly by splitting it up into finitely many pieces and appealing to the finite stability of upper box dimension.

	We also show, analogous to the Kleinian setting, that assumption (A) cannot be removed in general (or weakened in certain natural ways), see Proposition \ref{thm:B} below. That said, it is easy to come up with examples for which our result remains true but which do not satisfy the assumptions from the theorem.  For example, we may add to $E$  the orbit of an exceptional point which only adds a finite number of points to the orbital set but necessarily violates assumption (A), see Lemma \ref{bimpliesa}.

The formula from Theorem \ref{thm:A} parallels the analogous dimension formula for Kleinian groups. This correspondence reflects the unified viewpoint provided by the Sullivan dictionary, linking the dynamics of rational maps and Kleinian groups. In particular,  we can also call 
$$\overline{\dim}_{\mathrm{B}}O_{T}(E) =\max\left\lbrace\overline{\dim}_{\mathrm{B}}E, ~\overline{\dim}_{\mathrm{B}}J \right\rbrace $$
the `expected formula' in complex dynamics, noting however, that it does not always holds,  see Proposition \ref{thm:B} below.

 Theorem \ref{thm:A} easily gives the following corollary concerning backward orbits of single points, which may be of interest in its own right.
\begin{corollary} \label{cor1}
	Let $ T $ be a rational map of degree at least 2 with $J=J_T$ a bounded subset of $\mathbb{C}$ and let $ z\in \mathbb{C}$ be a point with bounded backwards orbit $O_T(z) \subseteq \mathbb{C}$ such that  the following property holds:\\
	(A') There exists a  connected open set $U$ such that $z \in U$,   $U \cap P(T) = \emptyset$, and $U \cap J  \neq \emptyset  $.\\
	Then 
	$$\overline{\dim}_{\mathrm{B}}O_{T}( z) = \overline{\dim}_{\mathrm{B}}J . $$
	
\end{corollary}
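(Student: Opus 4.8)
The plan is to obtain Corollary \ref{cor1} as the immediate specialisation of Theorem \ref{thm:A} to a singleton initial set. First I would take $E = \{z\}$, which is a non-empty compact subset of $\mathbb{C}$, and observe that the orbital set of the theorem then coincides with $O_T(z) = \bigcup_{k \ge 0} T^{-k}(\{z\})$. The hypothesis that $O_T(z)$ is bounded in $\mathbb{C}$ places us in the standing setting of Section 2 (so the usual reduction to $\mathbb{C}$ is in force), and hypothesis (A) of Theorem \ref{thm:A} for the set $\{z\}$ is exactly the stated condition (A'): there is a connected open set $U$ with $\{z\} \subseteq U$, $U \cap P(T) = \emptyset$, and $U \cap J \neq \emptyset$. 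Thus Theorem \ref{thm:A} applies directly with this choice of $E$.

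Second, I would evaluate the resulting formula. Since a single point is covered by one open set of any positive diameter, $N_\delta(\{z\}) = 1$ for all $\delta > 0$, and hence $\overline{\dim}_{\mathrm{B}}\{z\} = 0$. Therefore
$$\overline{\dim}_{\mathrm{B}}O_T(z) = \max\left\{\overline{\dim}_{\mathrm{B}}\{z\},\ \overline{\dim}_{\mathrm{B}}J\right\} = \max\left\{0,\ \overline{\dim}_{\mathrm{B}}J\right\} = \overline{\dim}_{\mathrm{B}}J,$$
where the last equality uses only that the upper box dimension is non-negative. This is precisely the claimed identity.

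There is no genuine obstacle here: all of the mathematical content is contained in Theorem \ref{thm:A}, and the corollary merely records that a single point contributes nothing to the maximum. The one incidental remark worth making — and which is needed nowhere in the argument — is that (A') forces $z \notin P(T)$, which in particular rules out $z$ being an exceptional point of $T$ (the exceptional points, being critical points in the normal forms $z \mapsto z^{\pm d}$, lie in $P(T)$); this is what makes $O_T(z)$ genuinely infinite and shows the conclusion is non-vacuous, being consistent with the fact that the Julia set of a rational map of degree at least $2$ has positive box dimension.
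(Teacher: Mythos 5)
Your proposal is correct and is exactly the argument the paper intends: the authors state only that Theorem \ref{thm:A} ``easily gives'' the corollary, and the content is precisely your specialisation to $E=\{z\}$ together with the observation that $\overline{\dim}_{\mathrm{B}}\{z\}=0$. Nothing further is needed.
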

Whilst we focus on the upper box dimension, Theorem \ref{thm:A} and Corollary \ref{cor1} provide sufficient conditions for equality of upper and lower box dimensions due to the easy reverse inequality
\[
\underline{\dim}_{\mathrm{B}}O_{T}(E) \geq \max\left\lbrace\underline{\dim}_{\mathrm{B}}E, ~\underline{\dim}_{\mathrm{B}}J \right\rbrace
\]
which holds by monotonicity of lower box dimension. For example, if the box dimensions of both $J$ and $E$ exist, then the box dimension of the orbital set exists.  In general, if the box dimension of $E$ does not exist, then estimating the lower box dimension of the orbital set is rather more subtle and will likely depend on finer scaling information regarding $E$.  This was shown to be the case  for inhomogeneous self-similar sets in \cite{ref6} with more precise information   recently obtained  in \cite{rutar}.

Next we show that if  assumption (A) of Theorem \ref{thm:A} is removed, then   the `expected formula' can fail to hold, even for extremely simple $T$ and $E$.

\begin{proposition}\label{thm:B} There are several examples where the expected formula fails to hold, including:\\
	(i) Let $T$ be defined by $T(z) = z^2$ and   $ E=[0,1] \subseteq  \mathbb{C}$.  Then   assumption  (A)   from Theorem~\ref{thm:A} fails   and, indeed,
	$$\overline{\dim}_{\mathrm{B}}O_{T}(E)= \dim_{\mathrm{H}}\overline{O_{T}(E)}=2,$$
	and 
	$$\overline{\dim}_{\mathrm{B}}E = \dim _{\mathrm{H}}J = \overline{\dim}_{\mathrm{B}}J=1.$$ 
	(ii) There exists a non-empty compact set $ E\subseteq\mathbb{C} $ and a hyperbolic  quadratic polynomial $ T $   for which assumption  (A) of Theorem~\ref{thm:A} fails and
	$$\overline{\dim}_{\mathrm{B}}O_{T}(E)=\frac{2}{3},$$
	and 
	$$\max\{\overline{\dim}_{\mathrm{B}}E, \overline{\dim}_{\mathrm{B}}J\} < 2/3.$$
	Moreover,   $E$ still satisfies the conclusion of Lemma \ref{bimpliesa} (that is, the orbital set accumulates on $J$), which is notable because this does not hold for example (i).  \\
	(iii) By modifying the previous example we get that for all $\varepsilon>0$, there exists a non-empty compact set $ E\subseteq\mathbb{C} $ and a polynomial $ T $   for which 
	$$\overline{\dim}_{\mathrm{B}}O_{T}(E)>2-\varepsilon,$$
	and 
	$$\max\{\overline{\dim}_{\mathrm{B}}E, \overline{\dim}_{\mathrm{B}}J\} < \varepsilon$$ 
	and so there is no limit to how badly the  conclusion from Theorem \ref{thm:A} can fail. \\
	(iv) If $T$ admits a Siegel disk or Herman ring, then there exists a choice of $E$ with $$\overline{\dim}_{\mathrm{B}}E=1$$ for which 
	$$\overline{\dim}_{\mathrm{B}}O_{T}(E)=2.$$
	In particular, there are examples of such $T$ with   $$ \overline{\dim}_{\mathrm{B}}J  <2$$
	and so the conclusion of Theorem \ref{thm:A} fails. Moreover these examples fail   assumption  (A)  from Theorem \ref{thm:A} but they would satisfy it if $U$ was not required to intersect $J$.
\end{proposition}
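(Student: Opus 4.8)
The plan is to prove each part of Proposition~\ref{thm:B} essentially by hand, exhibiting explicit choices of $T$ and $E$ and computing the relevant box dimensions directly. These are counterexamples, so the work is constructive rather than deductive, and the common thread is that when $E$ is allowed to touch the post-critical set (or, in part (iv), to sit inside a rotation domain), the backward orbit of $E$ can spread out over a region of positive area while $E$ and $J$ themselves remain low-dimensional.

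For part (i), $T(z)=z^2$ has Julia set the unit circle, so $\overline{\dim}_{\mathrm B}J=\dim_{\mathrm H}J=1$ trivially, and $\overline{\dim}_{\mathrm B}E = 1$ since $E=[0,1]$ is a segment. The key observation is that $T^{-k}([0,1])$ consists of $2^{k-1}$ radial segments from the origin (the $2^k$-th roots of $r$ for $r\in[0,1]$), and since the arguments $2\pi j/2^k$ become dense in $[0,2\pi)$, the closure $\overline{O_T(E)}$ is the entire closed unit disk, which has Hausdorff (hence upper box) dimension $2$. Verifying that assumption (A) fails is immediate: any open $U\supseteq[0,1]$ contains $0$, which is a critical point, hence $0\in P(T)$, so $U\cap P(T)\neq\emptyset$.

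For parts (ii) and (iii), I would take $T$ a hyperbolic quadratic polynomial (e.g. $z\mapsto z^2+c$ with $c$ in a hyperbolic component, so $J$ is a Cantor-like or quasicircle Julia set with $\overline{\dim}_{\mathrm B}J$ controllably small), and build $E$ as a carefully chosen compact set placed so that its backward iterates under the two inverse branches — which contract at geometric rates near $J$ — accumulate to fill out a set of dimension $2/3$ (part (ii)) or dimension $>2-\varepsilon$ (part (iii)). The mechanism is the one from the inhomogeneous IFS counterexamples of Baker--Fraser--M\'ath\'e referenced in the introduction: by choosing $E$ to be (a image of) a set with a prescribed sequence of scales at which it looks "fat," one arranges that $\bigcup_k T^{-k}(E)$ has a $\limsup$ covering-number growth strictly exceeding $\max\{\overline{\dim}_{\mathrm B}E,\overline{\dim}_{\mathrm B}J\}$; the $2/3$ arises as the optimal trade-off exponent in that construction, and letting the parameters degenerate pushes the dimension arbitrarily close to $2$ while keeping $E$ and $J$ dimension close to $0$. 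For part (ii) one must additionally check that $O_T(E)$ accumulates on $J$: this follows provided $E$ contains a non-exceptional point and is arranged to be within a region where backward branches are defined, using \cite[Theorem 4.2.7]{ref1}, in contrast to (i) where the orbit stays inside the unit disk and only meets $J$ in a set of its own.

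For part (iv), suppose $T$ has a Siegel disk or Herman ring $\Delta$, on which $T$ is conformally conjugate to an irrational rotation. I would choose $E$ to be a line segment (or a small arc of a circle) lying inside $\Delta$, so $\overline{\dim}_{\mathrm B}E=1$. Under the rotation conjugacy, the forward orbit of $E$ is dense in an annulus or disk inside $\Delta$; since the backward orbit contains the full grand orbit's worth of preimages and $T$ maps $\Delta$ onto itself bijectively (on the relevant rotation domain), $O_T(E)$ will be dense in a two-dimensional region, giving $\overline{\dim}_{\mathrm B}O_T(E)=2$. One then invokes a standard example of a polynomial (necessarily of degree $\geq 3$ for a Herman ring, or a quadratic with a Siegel disk such as $z\mapsto e^{2\pi i\theta}z+z^2$ for suitable Brjuno $\theta$) whose Julia set has upper box dimension strictly less than $2$ — for instance using known regularity results for Siegel Julia sets with Diophantine rotation number — so that the conclusion of Theorem~\ref{thm:A} genuinely fails. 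Finally I note that in this example (A) fails only through the clause $U\cap J\neq\emptyset$: one can choose $U=\Delta$, which misses $P(T)$ but, being contained in the Fatou set, also misses $J$.

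The main obstacle will be part (ii)/(iii): making the dimension computation for the backward-orbit construction rigorous requires care in transporting the Baker--Fraser--M\'ath\'e-style overlap construction from the self-similar (linear) setting to the nonlinear inverse branches of a rational map. The bounded distortion available in the hyperbolic (or merely expanding-near-$J$) regime should let the inverse branches be treated as approximate similarities at small scales, so that the covering-number estimates go through with only constant-factor losses; but verifying the precise exponent $2/3$ — rather than merely "something bigger than the expected value" — is where the delicate scale-by-scale bookkeeping lives, and I expect this to be the technical heart of the proof.
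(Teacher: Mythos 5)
Your treatments of parts (i) and (iv) are essentially the paper's arguments and are fine: for (i) the preimages of $[0,1]$ under $T^{-k}$ are the radial segments at angles $2\pi j/2^k$ (there are $2^k$ of them, not $2^{k-1}$, but this is immaterial), which are dense in the unit disk; for (iv) the backward orbit inside the rotation domain is the orbit of $E$ under the inverse irrational rotation in the linearising coordinate, hence dense in a set with non-empty interior, and McMullen's bounded-type Siegel disk examples give $\overline{\dim}_{\mathrm{B}}J<2$.

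There is, however, a genuine gap in your plan for parts (ii) and (iii). You propose to import the overlap mechanism of Baker--Fraser--M\'ath\'e \cite{ref18}, but that mechanism cannot operate here: for a hyperbolic quadratic $z^2+c$ with $|c|$ large, $J$ is a Cantor set and the two inverse branches on a neighbourhood of $J$ have disjoint images, i.e.\ the conformal analogue of the open set condition holds, and in that regime the expected formula for upper box dimension is exactly what Theorem \ref{thm:A} (or \cite[Corollary 2.2]{ref6} in the IFS setting) proves. So no amount of scale-by-scale bookkeeping with the branches near $J$ will produce a dimension drop of the required kind, and your worry about ``verifying the precise exponent $2/3$'' is symptomatic of being on the wrong track. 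The actual mechanism is a one-step phenomenon at the critical value, which is precisely what assumption (A) excludes: take $T(z)=z^2+6$ and $E=\{6+\tfrac{1}{n}\}_{n\in\mathbb{N}}\cup\{6\}$, so that $E$ contains the critical value $6=T(0)$. The local inverse at a critical value is a root map, which is H\"older but not Lipschitz, and it inflates the dimension of a polynomially decaying sequence: $T^{-1}(E)=\{\pm n^{-1/2}\}_{n}\cup\{0\}$ has upper box dimension $\tfrac{2}{3}$ while $\overline{\dim}_{\mathrm{B}}E=\tfrac{1}{2}$ (recall $\overline{\dim}_{\mathrm{B}}\{n^{-p}\}=\tfrac{1}{1+p}$), and $\overline{\dim}_{\mathrm{B}}J<\tfrac{2}{3}$ by the explicit estimate in \cite[Theorem 14.15]{ref2}. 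Since $T^{-1}(E)$ \emph{does} satisfy assumption (A), Theorem \ref{thm:A} applied to $T^{-1}(E)$ together with $O_T(E)=E\cup O_T(T^{-1}(E))$ gives $\overline{\dim}_{\mathrm{B}}O_T(E)=\tfrac{2}{3}$ exactly; the accumulation on $J$ follows from Lemma \ref{lem:1} because every Fatou orbit tends to $\infty$. Part (iii) is then the same trick with $T(z)=z^N+c$ and a doubly indexed sequence $E=\{c+n^{-p}+im^{-p}\}\cup\{c\}$: the $N$-th root at the critical value sends dimension $\tfrac{2}{1+p}$ to $\tfrac{2}{1+p/N}$, which tends to $2$ as $N\to\infty$ while $p$ large and $c$ large keep $\overline{\dim}_{\mathrm{B}}E$ and $\overline{\dim}_{\mathrm{B}}J$ below $\varepsilon$.
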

We defer the proof of Proposition \ref{thm:B} until subsection \ref{proof:3}. We are able to compute the upper box dimension of $O_T(E)$ in example (ii) from Proposition \ref{thm:B} by using the fact that $T^{-1}(E)$ \emph{does} satisfy assumption (A) from Theorem \ref{thm:A}, despite the fact that $E$ does not.  This phenomena provides another general class we can handle.  Indeed, if $T^{-k}(E)$ satisfies assumption (A) for some $k \geq 1$, then 
\[
O_T(E) = E \cup T^{-1}(E) \cup \cdots \cup T^{-(k-1)}(E) \cup O_T(T^{-k}(E))
\]
and then
	$$\overline{\dim}_{\mathrm{B}}O_{T}(E) =\max\left\lbrace\overline{\dim}_{\mathrm{B}}E, \ \overline{\dim}_{\mathrm{B}}T^{-1}(E), \dots, \overline{\dim}_{\mathrm{B}}T^{-k}(E),  \ ~\overline{\dim}_{\mathrm{B}}J \right\rbrace $$
	by Theorem \ref{thm:A} and finite stability of the upper box dimension.

Finally, we turn our attention to the Hausdorff dimension.  Due to it being countably stable, it is much easier to handle, but still requires a little work.  We get the  following simple result  and  defer the proof until subsection \ref{proof:2}.
\begin{proposition}\label{thm: C}
	Let $ T $ be a rational map of degree at least 2 and $ E $ be a non-empty compact subset of the Riemann sphere $ \mathbb{C_{\infty}} $.  
	Then 
	$$\dim_{\mathrm{H}}O_{T}(E)=\dim_{\mathrm{H}}E.$$
\end{proposition}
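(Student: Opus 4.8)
The plan is to exploit the countable stability of Hausdorff dimension together with the local normal form for holomorphic maps near critical points. Since $O_{T}(E)=\bigcup_{k\ge 0}T^{-k}(E)$ is a countable union and Hausdorff dimension is countably stable (see \cite{ref2}), we have $\dim_{\mathrm H}O_{T}(E)=\sup_{k\ge 0}\dim_{\mathrm H}T^{-k}(E)$. The lower bound $\dim_{\mathrm H}O_{T}(E)\ge\dim_{\mathrm H}E$ is immediate since $E\subseteq O_{T}(E)$ (take $k=0$), so it suffices to prove $\dim_{\mathrm H}T^{-k}(E)\le\dim_{\mathrm H}E$ for every $k\ge 0$. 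Writing $T^{-k}(E)=T^{-1}\big(T^{-(k-1)}(E)\big)$, an induction on $k$ reduces the whole proposition to the single claim that
$$\dim_{\mathrm H}T^{-1}(A)\le\dim_{\mathrm H}A\qquad\text{for every }A\subseteq\mathbb{C}_\infty.$$

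To prove this claim I would argue locally, using that Hausdorff dimension on $\mathbb{C}_\infty$ (with the spherical metric, say) is a local, bi-Lipschitz-invariant notion. Fix $z_{0}\in\mathbb{C}_\infty$. By the standard local normal form for a non-constant holomorphic map (see e.g. \cite{ref1, mcmullen}), there is an open neighbourhood $V=V_{z_{0}}$ of $z_{0}$, an integer $m=m(z_{0})\ge 1$ (the local degree of $T$ at $z_{0}$), and holomorphic coordinate charts $\phi$ near $z_{0}$ and $\psi$ near $T(z_{0})$ — which, after shrinking $V$ so that they extend biholomorphically past $\overline V$, are bi-Lipschitz onto their images — such that $\psi\circ T\circ\phi^{-1}(\zeta)=\zeta^{m}$ and $T(V)$ lies in the domain of $\psi$. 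Hence $\dim_{\mathrm H}\big(T^{-1}(A)\cap V\big)$ equals the Hausdorff dimension of the ``$m$-th root set'' $\{\zeta:\zeta^{m}\in S\}$ for some $S$ with $\dim_{\mathrm H}S\le\dim_{\mathrm H}A$. The point is that the $m$-th root operation does not increase Hausdorff dimension: on $\mathbb{C}\setminus\{0\}$ the map $\zeta\mapsto\zeta^{m}$ has non-vanishing derivative, so $\mathbb{C}\setminus\{0\}$ can be covered by countably many open sets (e.g. bounded sector-annuli) on each of which it is a biholomorphism onto its image and hence bi-Lipschitz; countable stability then gives $\dim_{\mathrm H}\{\zeta:\zeta^{m}\in S\}\le\dim_{\mathrm H}S$, the single point $\zeta=0$ being negligible. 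Therefore $\dim_{\mathrm H}\big(T^{-1}(A)\cap V\big)\le\dim_{\mathrm H}A$. Covering the compact sphere $\mathbb{C}_\infty$ by finitely many such neighbourhoods $V_{z_{1}},\dots,V_{z_{N}}$ and using finite stability of Hausdorff dimension yields $\dim_{\mathrm H}T^{-1}(A)=\max_{1\le i\le N}\dim_{\mathrm H}\big(T^{-1}(A)\cap V_{z_{i}}\big)\le\dim_{\mathrm H}A$, which is the claim.

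There is no serious obstacle here; the proposition is easy precisely because Hausdorff dimension is countably stable. The only step needing any care is the behaviour near the (finitely many) critical points of $T$, where $T$ is a branched cover modelled on $\zeta\mapsto\zeta^{m}$, and where one must observe that the countable decomposition above prevents the $m$-th root from inflating Hausdorff dimension. This is essentially the feature that fails for box dimension — taking an $m$-th root can strictly increase box dimension near the branch point — which is exactly why Theorem \ref{thm:A} requires hypothesis (A) and why Proposition \ref{thm:B} is possible. One should also phrase the local normal form in coordinate charts so as to include the point $\infty$, and may note in passing that $T$, being a holomorphic self-map of the compact manifold $\mathbb{C}_\infty$, is Lipschitz in the spherical metric; this gives the reverse inequality $\dim_{\mathrm H}T^{-1}(A)\ge\dim_{\mathrm H}A$ for free, although it is not needed for the statement.
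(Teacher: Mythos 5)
Your proposal is correct. The overall strategy is the same as the paper's --- both proofs rest on countable stability of Hausdorff dimension, reduce to showing that a single application of $T^{-1}$ does not raise dimension, and observe that the only possible obstruction is the finite branch locus, which is Hausdorff-negligible --- but the execution differs in a way worth noting. The paper works in the \emph{target}: it excises a shrinking $1/n$-neighbourhood $B_n$ of the finite set of critical values, decomposes $E\setminus B_n$ into pieces lying in simply connected domains free of critical values, invokes univalent analytic inverse branches (the monodromy machinery already set up for Theorem \ref{thm:A}) with bounded derivative on each piece, and then takes a supremum over $n$. You instead work in the \emph{source}: you apply the local normal form $\psi\circ T\circ\phi^{-1}(\zeta)=\zeta^{m}$ at every point of the sphere, cover $\mathbb{C}_\infty$ by finitely many such charts, and prove directly that the $m$-th root operation cannot increase Hausdorff dimension by covering $\mathbb{C}\setminus\{0\}$ with countably many regions on which $\zeta\mapsto\zeta^m$ is bi-Lipschitz. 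Your route is somewhat more self-contained and handles the critical points head-on rather than by excision, and it sidesteps the slightly fussy step in the paper where $E\setminus B_n$ must be split into finitely many pieces inside simply connected domains and a limit over $n$ taken; the paper's route has the advantage of reusing the inverse-branch framework already established for the box-dimension argument. Your closing remark --- that the countable decomposition is exactly what is unavailable for box dimension, which is why the root map can inflate box dimension near a branch point and why assumption (A) is needed in Theorem \ref{thm:A} --- is accurate and is precisely the mechanism behind Proposition \ref{thm:B}(ii).
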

One might wonder why Proposition \ref{thm: C} differs from the Hausdorff dimension formula for inhomogeneous attractors, since it does not include $\dim_{\mathrm{H}} J$.  However, this is because we did not take the closure of the orbital set and if we did then we would obtain
	\begin{align} \tag{2.1}\label{hdformula}
	\dim_{\mathrm{H}}\overline{O_{T}(E)}=\max\{ \dim_{\mathrm{H}}E,  \dim_{\mathrm{H}}J\}
	\end{align}
	provided that 
	\[
	\overline{O_{T}(E)} = O_{T}(E) \cup J.
	\]
	This latter condition is guaranteed provided that the assumptions of Lemma \ref{lem:1} hold for $A=E$.  Otherwise, \eqref{hdformula} may fail even though Proposition \ref{thm: C} holds; see Proposition \ref{thm:B} (i) for such an example.   We chose not to take the closure of the orbital set in the definition because taking the closure does not affect the box dimensions and so the results pertaining to upper box dimension remain unchanged.  

\begin{figure}[htp]
	\centering

	\begin{subfigure}[t]{0.32\textwidth}
		\includegraphics[width=\linewidth]{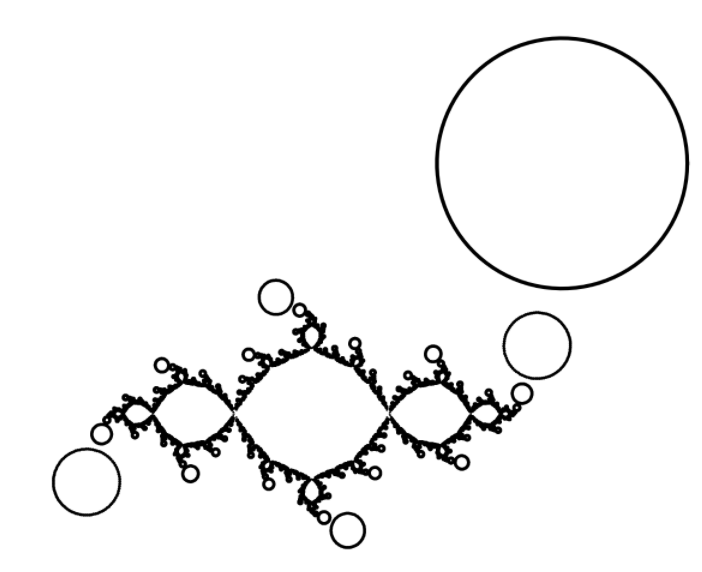}
		\caption{Orbital set generated by the rational map $T_{1}(z) = z^2 - 1$ and $E = \{z: |z-2-2i|=1\}$.}
		\label{fig:sub2}
	\end{subfigure}
	\hfill
	\begin{subfigure}[t]{0.32\textwidth}
		\includegraphics[width=\linewidth]{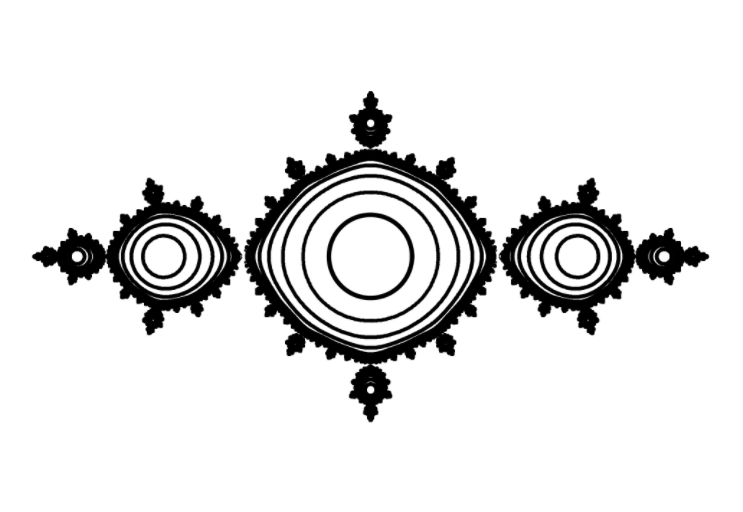}
		\caption{Orbital set generated by the rational map $T_{1}(z) = z^2 - 1$ and $E=\left\lbrace z: |z|=0.2\right\rbrace $} 
	\label{fig:sub3}
\end{subfigure}
\hfill
\begin{subfigure}[t]{0.32\textwidth}
	\includegraphics[width=\linewidth]{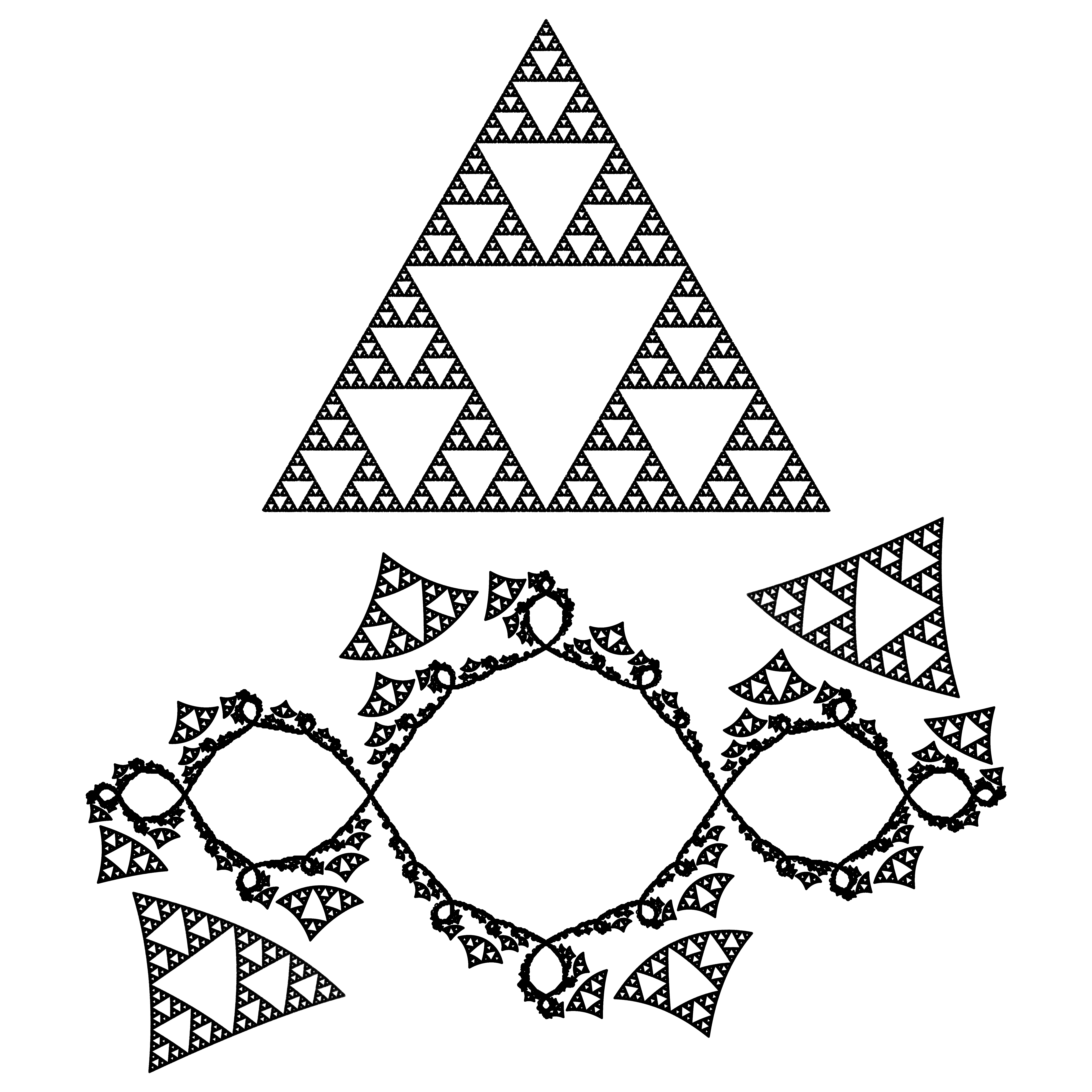}
	\caption{Orbital set generated by the rational map $T_{1}(z) = z^2 - 1$ and a Sierpiński triangle.}
	\label{fig:sub5}
\end{subfigure}

\vspace{35pt}

\begin{subfigure}[t]{0.32\textwidth}
	\includegraphics[width=\linewidth]{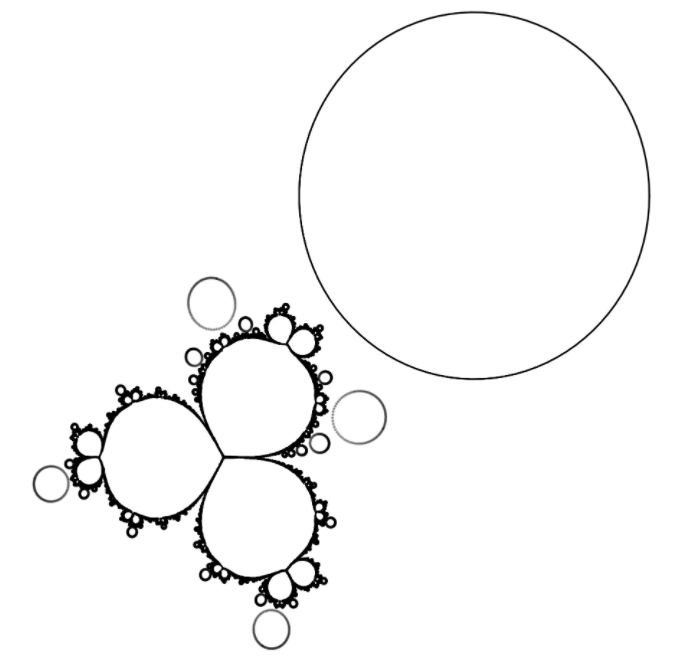}
	\caption{Orbital set generated by the rational map $T_{2}(z) =  z^4+z$ and $E = \{z: |z-2-2i| = 1.4\}$.}
	\label{fig:sub4}
\end{subfigure}
\hfill
\begin{subfigure}[t]{0.32\textwidth}
	\includegraphics[width=\linewidth]{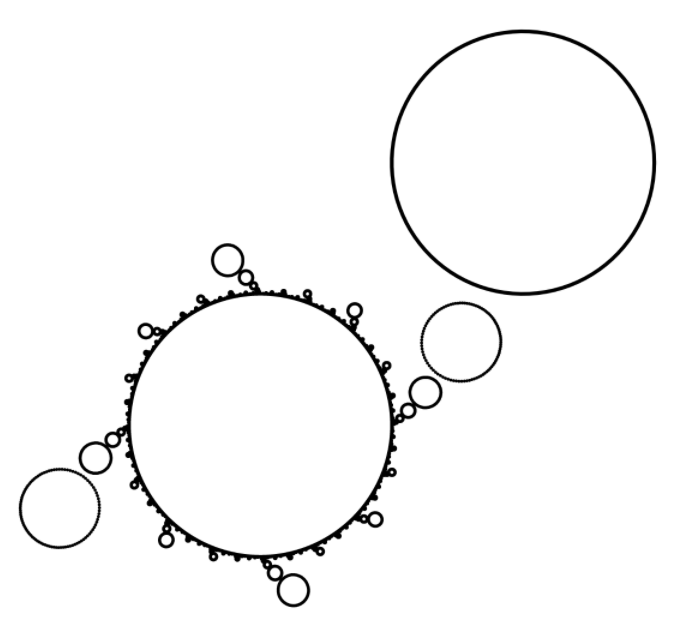}
	\caption{Orbital set generated by the rational map $T_{3}(z)=z^{2}$ and $E=\{z:|z-2-2i|=1\}$.}
	\label{fig:sub1}
\end{subfigure}
\hfill
\begin{subfigure}[t]{0.32\textwidth}
	\includegraphics[width=\linewidth]{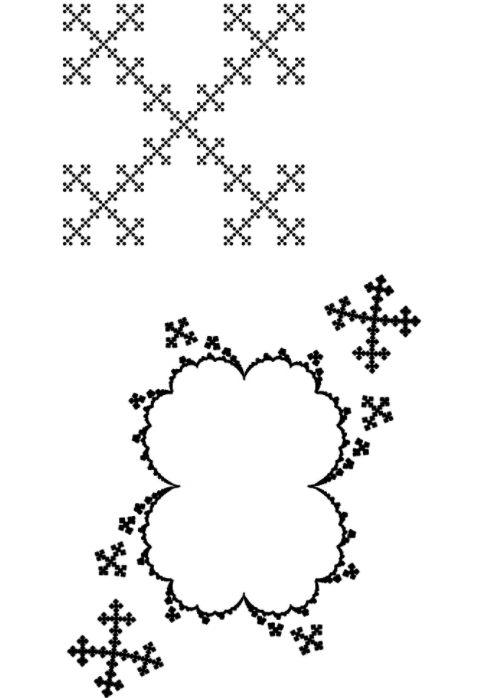}
	\caption{Orbital set generated by the rational map $T_{4}(z) = z^2+z$ and a Vicsek fractal.}
	\label{fig:sub6}
\end{subfigure}

\caption{Orbital sets generated by standard rational maps and various compact subsets of $ \mathbb{C_{\infty}} $. The maps include $ T_{1}( z) =z^{2}-1,~T_{2}( z) =z^{4}+z, ~T_{3}( z) =z^{2} $ and $T_{4}(z) = z^2+z  $ and the compact subsets include circles and translates of the Sierpiński triangle and the Vicsek fractal. }
\label{fig:1}
\end{figure}
\begin{figure}[htp]
	\centering

	\begin{subfigure}[t]{0.48\textwidth}
		\includegraphics[width=\linewidth]{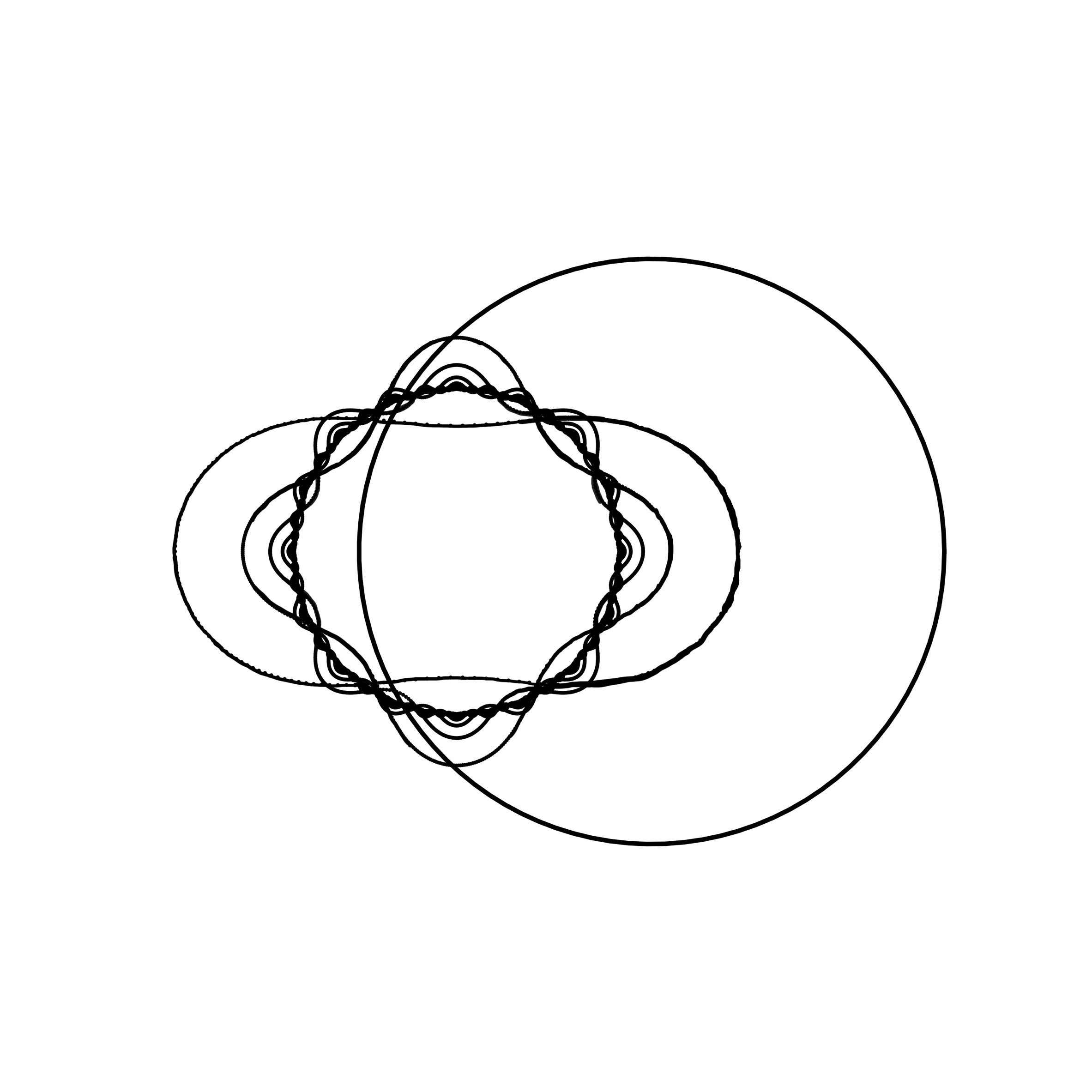}
		\caption{Orbital set generated by the rational map $T_{1}(z) = z^2 $ and $E = \{z: |z-1.2|=1.8\}$.}
		\label{fig:sub2_1}
	\end{subfigure}
	\hfill
	\begin{subfigure}[t]{0.48\textwidth}
		\includegraphics[width=\linewidth]{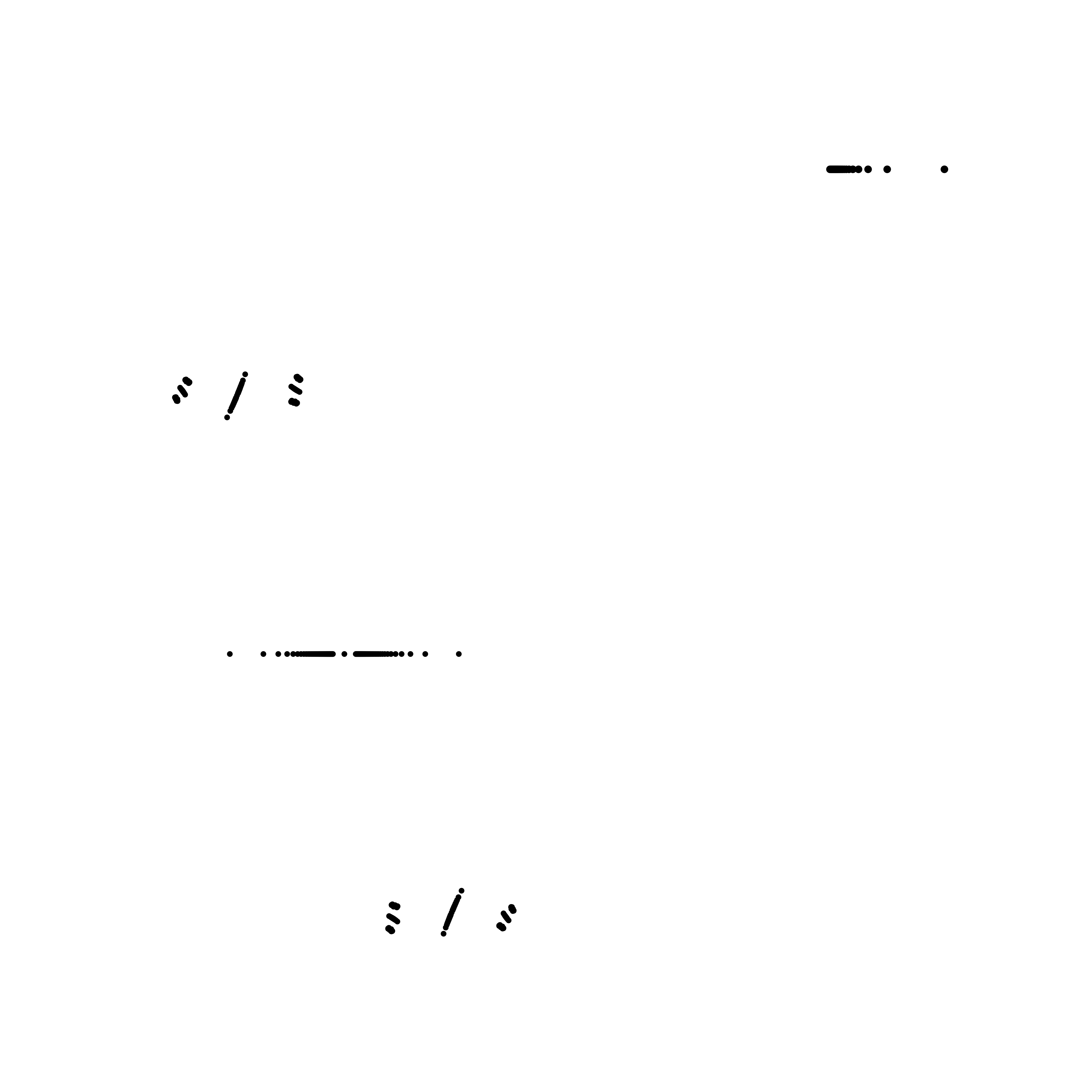}
		\caption{Orbital set generated by the rational map $T_{2}(z)=z^{2}+3\sqrt{2}+3\sqrt{2}i$ and $E=\{3\sqrt{2}+3\sqrt{2}i\}\cup\{3\sqrt{2}+3\sqrt{2}i+\frac{1}{n}\}_{n\in \mathbb{N}}$. This is similar to the example from Proposition \ref{thm:B} (ii) which fails Assumption (A).}
		\label{fig:sub2_2}
		
	\end{subfigure}
	
	\vspace{35pt}

	\begin{subfigure}[t]{0.48\textwidth}
		\includegraphics[width=\linewidth]{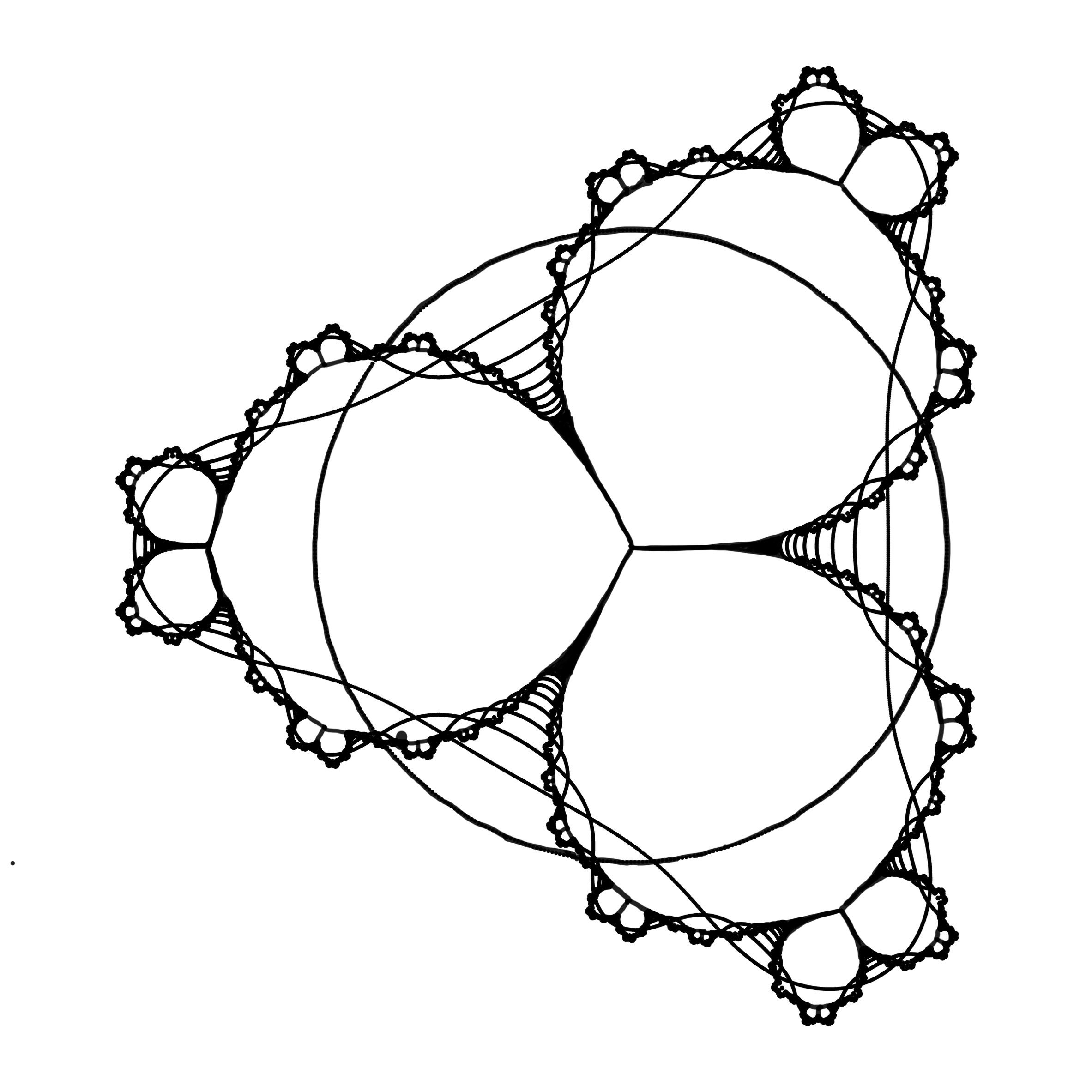}
		\caption{Orbital set generated by the rational map $T_{3}(z) =  z^4+z$ and $E = \{z: |z| = 0.75\}$.}
		\label{fig:sub2_3}
	\end{subfigure}
	\hfill
	\begin{subfigure}[t]{0.48\textwidth}
		\includegraphics[width=\linewidth]{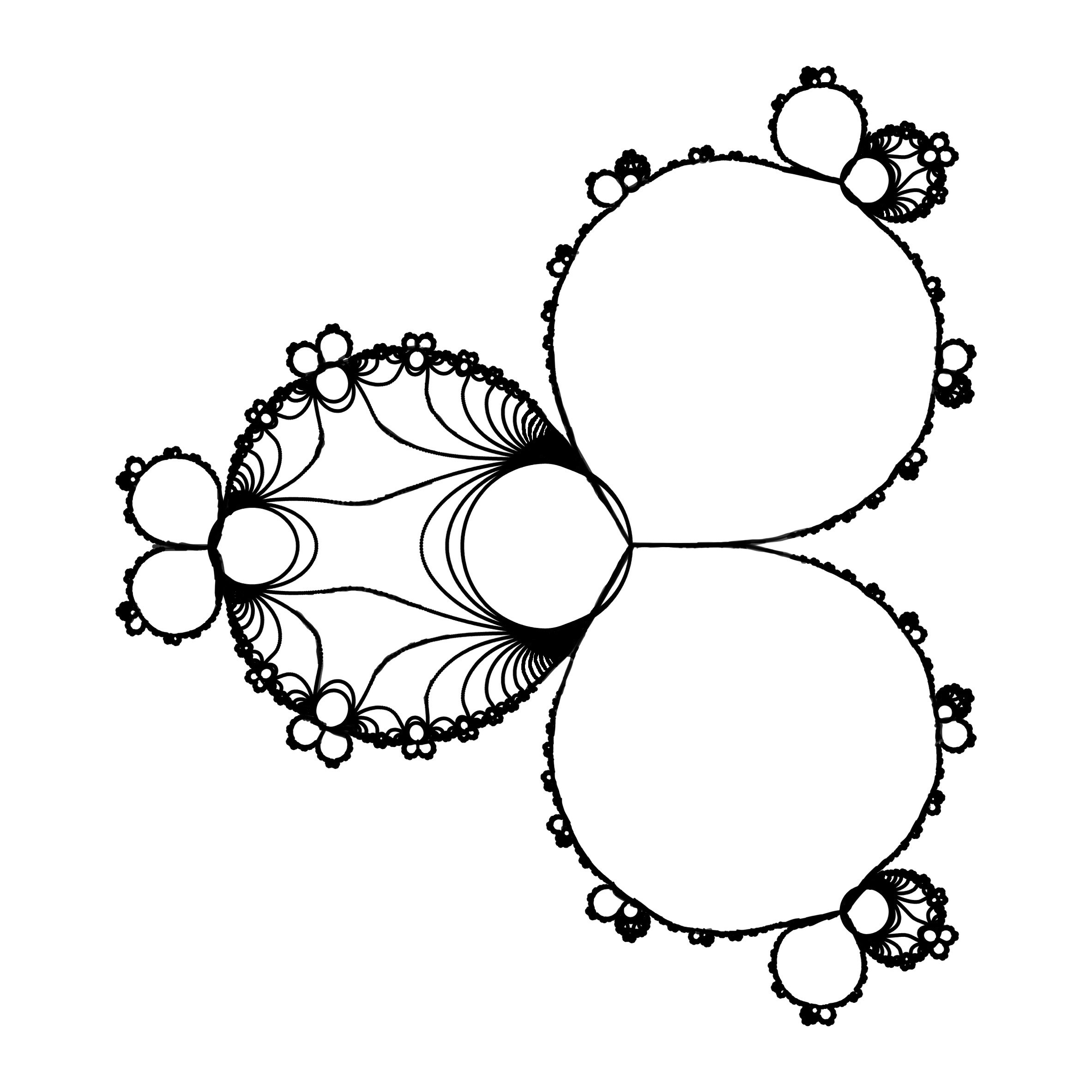}
		\caption{Orbital set generated by the rational map $T_{3}(z) = z^4+z$ and $E=\left\lbrace z: |z+0.2|=0.2\right\rbrace $. This example fails  Assumption (A).} 
	\label{fig:sub2_4}
\end{subfigure}

\caption{Orbital sets generated by standard rational maps and various compact subsets of $ \mathbb{C_{\infty}} $. The maps include $ T_{1}(z) = z^2,~T_{2}(z)=z^{2}+3\sqrt{2}+3\sqrt{2}i,$ and $T_{3}(z) =  z^4+z  $ and the compact subsets are different circles and $\{3\sqrt{2}+3\sqrt{2}i\}\cup\{3\sqrt{2}+3\sqrt{2}i+\frac{1}{n}\}_{n\in \mathbb{N}}$, where $3\sqrt{2}+3\sqrt{2}i  $ is a critical value of $ T_{2}(z) $. }
\label{fig:2}
\end{figure}

\newpage
\section{\textbf{Proofs}}

\subsection{Notation, first reductions, and inverse branches} \label{reductions}

For $A,B \geq 0$, we write $ A\lesssim B $ if there exists a constant $ c>0 $ such that $ A\leq cB, $ and $ A\approx B $ if  $ A\gtrsim B $ and $ A\lesssim B .$ We write $\#X$ to denote the cardinality of a set $X$. We also write  $ \textup{dist}\left( x, X\right)  $ to denote the distance between a point $ x $ and a set $ X\subseteq\mathbb{C_{\infty}}, $ defined by 
$$\textup{dist}\left( x, X\right)=\inf\left\lbrace | x-y| :y\in X\right\rbrace .$$
Similarly, $ \textup{diam}\left( X\right)  $ means the diameter of the set $ X $, defined by 
$$\textup{diam}\left( X\right)=\sup\left\lbrace | x-y| :x,y\in X \right\rbrace .$$ 

Throughout this section up until the proof of Theorem \ref{thm:A}, we fix  a rational map  $ T $  of degree at least 2 with $J=J_T$ a bounded subset of $\mathbb{C}$ and a non-empty compact set $ E\subseteq \mathbb{C} $ and let $U$ be a  connected open set $U$ such that $E \subseteq U$, and such that $U \cap P(T) = \emptyset$ and $U \cap J  \neq \emptyset  $.  The existence of $U$ is guaranteed by assumption (A) in Theorem \ref{thm:A}.

Without loss of generality, we   assume that   $ U  $ is   \emph{simply} connected and this   reduction is   important.  We explain how to make this reduction here. Since $U$ is originally assumed to be open and connected, it is also path connected. Fix $z \in (J \cap U) \setminus P(T)$, noting that we can choose such $z$ by assumption (A).  To each point $x \in E$, let $U_x, U_x'$ be   open neighbourhoods of a path connecting $z$ and $x$ with the property that $\overline{U_x} \subseteq U_x' \subseteq U$ and with both $U_x, U_x'$   simply connected.     We can do this because  $E$ is compact and by assuming that the paths connecting $x$ and $z$ are simple curves ($U$ must also be arc connected).  The open sets $U_x$ form a cover of $E \cup \{z\}$ and since $E \cup \{z\}$ is compact we may extract a finite subcover $\{U_x\}_{x \in A}$ with $A$ finite.  For each $x \in A$, define $E_x =\overline{E \cap U_x}$ and note that $E_x$ now satisfies assumption (A) with the   \emph{simply connected} open set $U_x'$.   Finally, since the upper  box dimension is finitely stable we get
$$\overline{\dim}_{\mathrm{B}}O_{T}( E) =\max_{x \in A}\left\lbrace\overline{\dim}_{\mathrm{B}}O_{T}( E_x) \right\rbrace$$
and it suffices to consider each $ E_x$ separately and we have therefore  made the desired reduction.

Let $  d = \deg T $. If $ w $ is not a critical value of the rational map $ T^{n}$ ($n \geq 1$), then there exist $ d^{n}$ distinct locally defined analytic inverse branches $ S_{n,j} $, $ j = 1, \dots, d^{n} $, of $ (T^{n})^{-1}$ defined in a neighbourhood of $ w $, see \cite[Chapter 9, Introductory Remarks]{ref1}. 
Each $ S_{n,j} $ is a single-valued analytic function defined near $ w $, satisfying $S_{n,j}(w) \in (T^{n})^{-1}(w) $, and these values are distinct for distinct $ j. $  Moreover, since we assume that $ E $ is contained in the simply connected domain $ U $  which contains no critical values of the rational map $ T^{n}$,  by the monodromy theorem \cite[Chapter 8, Theorem 2]{ref3}, each inverse branch $ S_{n,j} $ can be uniquely analytically continued to all of $ U $, and is therefore well-defined on $ U .$ We define the inverse branches on the whole of $ U $ because this allows us to ensure univalency and to apply the Koebe distortion theorem on a neighbourhood of $ E $ rather than just on $ E $ itself. This is crucial for controlling derivatives and for estimating the thermodynamic properties of the orbital set. Since $ S_{n,j} $ is locally injective,   injectivity is preserved under analytic continuation in $ U $. Therefore, each $ S_{n,j} $ is univalent on the whole of $ U $. Throughout the rest of the paper, we write
$$|S'_{n,j}(E)| := \max_{z\in E}|S'_{n,j}(z)|,  \qquad (\text{for $n \geq 1$ and }j=1,\dots,d^{n}),$$
where the maximum value is attained by the compactness of $ E $. Further note that $0<|S'_{n,j}(E)|<\infty$ for all $n,j$. For convenience we also write $S_{0,1} = T^0$ both to denote the identity map.

\subsection{Dynamical properties of rational maps}

In this section we recall some important results from complex dynamics and geometric function theory which we will need.  The following result characterizes the critical values of the iterates.
\begin{lemma}\cite[Theorem 2.7.3]{ref1}
	Let $\mathrm{Crit}( T) $ be the set of critical points of  $ T $. Then the set of critical values of $ T^{n}$ is given by
	$$\bigcup_{k=1}^{n}T^{k}( \mathrm{Crit}( T)) .$$
	In particular, for each $n \geq 1$, this is a subset of the post critical set $P(T)$.
\end{lemma}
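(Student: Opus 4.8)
The plan is to reduce the statement to the multiplicativity of local degrees under composition, phrased so as to remain valid at every point of $\mathbb{C}_{\infty}$, including $\infty$ and the poles of $T$. First I would record the following germ‑level fact for non‑constant holomorphic maps $f,g$ between Riemann surfaces: the composition $g\circ f$ fails to be locally injective at a point $p$ if and only if $f$ fails to be locally injective at $p$, or $g$ fails to be locally injective at $f(p)$. One direction is trivial (a composition of local homeomorphisms is a local homeomorphism); for the other, use the normal form $w\mapsto w^{k}$ with $k\geq 2$ of a non‑locally‑injective holomorphic germ: if $f$ has such a normal form at $p$ then $g\circ f$ has normal form $w\mapsto w^{km}$ with $m\geq 1$, so $g\circ f$ is not locally injective at $p$; and if $f$ is a local homeomorphism at $p$, then $g\circ f$ is locally injective at $p$ exactly when $g$ is locally injective at $f(p)$. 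Working with local injectivity (equivalently, with local degrees in local coordinates) rather than with the vanishing of $(T^{n})'$ is precisely what makes the argument uniform at $\infty$.

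Applying this fact to the decomposition $T^{n}=T\circ T^{n-1}$ and inducting on $n$, I would obtain the criterion that $z$ is a critical point of $T^{n}$ if and only if $T^{i}(z)\in\mathrm{Crit}(T)$ for some $0\leq i\leq n-1$. Passing to critical values: if $z$ is such a critical point, pick $i$ with $w:=T^{i}(z)\in\mathrm{Crit}(T)$; then the corresponding critical value is $T^{n}(z)=T^{n-i}(w)$, and since $1\leq n-i\leq n$ this lies in $\bigcup_{k=1}^{n}T^{k}(\mathrm{Crit}(T))$. Conversely, given $w\in\mathrm{Crit}(T)$ and $1\leq k\leq n$, surjectivity of $T^{n-k}$ on $\mathbb{C}_{\infty}$ (valid since $\deg T\geq 2$, so $T$ is non‑constant) lets us choose $z$ with $T^{n-k}(z)=w$, taking $z=w$ when $k=n$; the criterion then shows $z$ is a critical point of $T^{n}$, with critical value $T^{n}(z)=T^{k}(T^{n-k}(z))=T^{k}(w)$. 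The two inclusions together yield the displayed formula for the set of critical values of $T^{n}$.

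The final assertion is then immediate from the definition of the post critical set: for every $n\geq 1$,
$$\bigcup_{k=1}^{n}T^{k}(\mathrm{Crit}(T))\ \subseteq\ \bigcup_{m=1}^{\infty}T^{m}(\mathrm{Crit}(T))\ \subseteq\ \overline{\bigcup_{m=1}^{\infty}T^{m}(\mathrm{Crit}(T))}\ =\ P(T).$$

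I do not expect a serious obstacle here: the whole argument is a routine induction once the germ‑level composition fact is in place, and the only point requiring genuine care is the uniform treatment of $\infty$ and the poles, which is handled by phrasing everything in terms of local injectivity (local degrees) instead of derivatives. This is of course a classical fact, and in the paper it is simply quoted from \cite[Theorem 2.7.3]{ref1}.
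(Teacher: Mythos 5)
Your proof is correct and complete: the multiplicativity of local degrees under composition gives the characterisation of $\mathrm{Crit}(T^n)$, the two inclusions for the critical values follow as you describe (surjectivity of a non-constant rational map on $\mathbb{C}_\infty$ handles the converse direction), and the containment in $P(T)$ is immediate from the definition. The paper offers no proof of its own — it simply cites Beardon — and your argument via local injectivity/valency is essentially the same standard one found there, so there is nothing substantive to compare.
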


If $ z \in F $ and is not exceptional, then the closure of its backward orbit contains $ J .$  The following result makes this uniform over $z \in E$. We state it with a general compact set $A$ but will apply it with $A=E$, see Lemma \ref{bimpliesa}.

\begin{lemma}\label{lem:1}\cite[Theorem 4.2.8]{ref1}
	Let $A \subseteq \mathbb{C}_\infty$ be a compact set with the property that for all $ z $ in $ F $, the sequence $ \left\lbrace T^{n}( z) :n\geq1\right\rbrace  $ does not accumulate at any point of $A$. Then, for any open set $ V $ which contains $ J $, $ T^{-n}( A) \subseteq V $ for all sufficiently large $ n. $
\end{lemma}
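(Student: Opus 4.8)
The plan is to argue by contradiction, combining the normality of $\{T^{n}\}$ on the Fatou set with two uses of compactness. Suppose the conclusion fails for some open set $V \supseteq J$. Then there is a strictly increasing sequence $(n_{k})$ and points $z_{k} \in T^{-n_{k}}(A)$ with $z_{k} \notin V$. Put $K := \mathbb{C}_{\infty} \setminus V$; since $V$ is open and contains $J$, the set $K$ is a compact subset of the Fatou set $F$. Passing to a subsequence, I would assume $z_{k} \to z_{\infty}$ for some $z_{\infty} \in K \subseteq F$.

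Next I would fix a connected open neighbourhood $D$ of $z_{\infty}$ with $\overline{D} \subseteq F$, which is possible as $F$ is open. Since $D \subseteq F$, the family $\{T^{n}|_{D} : n \geq 1\}$ is normal, so after passing to a further subsequence we have $T^{n_{k}} \to g$ locally uniformly on $D$ in the spherical metric, where $g : D \to \mathbb{C}_{\infty}$ is continuous. As $z_{k} \to z_{\infty}$ and $z_{k} \in D$ for all large $k$, local uniform convergence on a compact neighbourhood of $z_{\infty}$ yields $T^{n_{k}}(z_{k}) \to g(z_{\infty})$; since $T^{n_{k}}(z_{k}) \in A$ and $A$ is compact, we conclude $g(z_{\infty}) \in A$.

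Finally, I would apply the same convergence at the single point $z_{\infty} \in D$: because $T^{n_{k}} \to g$ locally uniformly and $n_{k} \to \infty$, the forward orbit $\{T^{n}(z_{\infty}) : n \geq 1\}$ has the subsequence $T^{n_{k}}(z_{\infty})$ converging to $g(z_{\infty}) \in A$. Thus this orbit accumulates at a point of $A$ even though $z_{\infty} \in F$, contradicting the hypothesis on $A$. This contradiction proves the lemma.

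The argument presents no genuine obstacle; the only point needing a little care is the bookkeeping for normal convergence on the Riemann sphere, where the limit $g$ may be the constant $\infty$ and all convergence is read in the spherical metric $d_{\mathrm{sph}}$. In each case $g(z_{\infty})$ is a well-defined point of $\mathbb{C}_{\infty}$, and the triangle inequality
\[
d_{\mathrm{sph}}\big(T^{n_{k}}(z_{k}),\, g(z_{\infty})\big) \leq d_{\mathrm{sph}}\big(T^{n_{k}}(z_{k}),\, g(z_{k})\big) + d_{\mathrm{sph}}\big(g(z_{k}),\, g(z_{\infty})\big),
\]
together with the continuity of $g$ at $z_{\infty}$, makes the passage $T^{n_{k}}(z_{k}) \to g(z_{\infty})$ rigorous; the remainder is routine, which is why it is safe to cite the result.
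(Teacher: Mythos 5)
Your proof is correct. The paper gives no argument for this lemma --- it is quoted verbatim from \cite[Theorem 4.2.8]{ref1} --- and your contradiction argument (compactness of $\mathbb{C}_\infty\setminus V$ inside $F$, normality of $\{T^{n}\}$ near the limit point $z_\infty$, closedness of $A$, and the hypothesis applied at $z_\infty$) is essentially the standard proof of that cited result, including the correct care taken with a possibly constant-$\infty$ limit in the spherical metric.
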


To estimate the local behaviour of the inverse branches of a rational map, we need the following results, which are called the Koebe One-Quarter Theorem and Generalized Distortion Theorem respectively.
\begin{lemma}\cite[Theorem 14.7.8]{ref16}\label{lem:4}
	Let $ f $ be an analytic and univalent function in the unit disk $ \mathbb{D}^{2} $ satisfying $ f(0)=0 $ and $ f'(0)=1 $. Then the disk $ \left\lbrace w:|w|<\frac{1}{4} \right\rbrace  $ is contained in $ f( \mathbb{D}^{2} ) $.
\end{lemma}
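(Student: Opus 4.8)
The plan is to prove this statement — the classical Koebe one-quarter theorem — via Bieberbach's coefficient inequality, which in turn rests on the area theorem. Write the Taylor expansion $f(z) = z + a_{2}z^{2} + a_{3}z^{3} + \cdots$ on $\mathbb{D}^{2}$; since $f$ is univalent with $f(0)=0$ and $f'(0)=1$, it has a simple zero only at the origin, so $f(z)/z$ is analytic and non-vanishing on $\mathbb{D}^{2}$. The first ingredient is the \emph{area theorem}: if $g(\zeta) = \zeta + b_{0} + b_{1}\zeta^{-1} + b_{2}\zeta^{-2} + \cdots$ is univalent on $\{\zeta : |\zeta| > 1\}$, then $\sum_{n \geq 1} n|b_{n}|^{2} \leq 1$; this follows by applying Green's theorem to compute the non-negative area of the complement of $g(\{|\zeta| > r\})$ for $r > 1$ and letting $r \to 1^{+}$.

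Next I would deduce Bieberbach's bound $|a_{2}| \leq 2$. Choosing the analytic branch of the square root that equals $z$ to leading order, set $h(z) = \sqrt{f(z^{2})} = z + \tfrac{1}{2}a_{2}z^{3} + \cdots$; this is well-defined since $f(z^{2})/z^{2}$ is analytic and non-vanishing, it is odd, and it is univalent because $h(z_{1}) = h(z_{2})$ forces $f(z_{1}^{2}) = f(z_{2}^{2})$, hence $z_{1}^{2} = z_{2}^{2}$, and oddness rules out $z_{1} = -z_{2}$ unless both vanish. Then $g(\zeta) := 1/h(1/\zeta) = \zeta - \tfrac{1}{2}a_{2}\zeta^{-1} + \cdots$ is univalent on $\{|\zeta| > 1\}$, so the area theorem gives $|\tfrac{1}{2}a_{2}| \leq 1$.

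Finally I would run the omitted-value argument. Suppose $c \in \mathbb{C} \setminus f(\mathbb{D}^{2})$ with $c \neq 0$. The M\"obius map $w \mapsto cw/(c - w)$ is injective on $f(\mathbb{D}^{2})$ since $c$ is omitted, so $f_{1}(z) := cf(z)/(c - f(z))$ is univalent on $\mathbb{D}^{2}$ with $f_{1}(0) = 0$, $f_{1}'(0) = 1$, and second Taylor coefficient $a_{2} + 1/c$. Applying Bieberbach's bound to both $f$ and $f_{1}$ and using the triangle inequality, $|1/c| \leq |a_{2} + 1/c| + |a_{2}| \leq 4$, so $|c| \geq 1/4$. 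Hence no point of $\{w : |w| < 1/4\}$ is omitted by $f$, which is the claim.

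The main obstacle is the pair of preliminary facts — the area theorem and the square-root transformation underlying Bieberbach's inequality — where the real content sits; the closing omitted-value step is then a short algebraic computation. Since this result is standard and used here only as a tool, one could equally simply invoke it, as is done via \cite[Theorem 14.7.8]{ref16}.
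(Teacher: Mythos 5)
Your argument is the classical proof of the Koebe one-quarter theorem (area theorem $\Rightarrow$ Bieberbach's bound $|a_{2}|\leq 2$ $\Rightarrow$ omitted-value estimate), and it is correct in all its steps; this is precisely the proof given in the cited source. The paper itself offers no proof, treating the lemma as a standard black-box via the reference to Conway, so your write-up simply supplies the standard argument behind that citation.
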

\begin{lemma}\cite[Theorem 14.7.16]{ref16}\label{lem:2}
	If $ K $ is a compact subset of an open region $ \Omega $, there is a constant $ M \left( dependent~ on~ K \right) $ such that for every univalent function $ f $ on $ \Omega $ and every pair of points $ z $ and $ w $ in $ K $,
	$$\dfrac{1}{M}\leq \dfrac{|f'( z) |}{|f'( w) |} \leq M.$$
\end{lemma}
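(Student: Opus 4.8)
The plan is to establish a \emph{local} two-sided bound on $|f'|$ with absolute constants on small disks, and then to \emph{globalise} it over $K$ by a chaining argument that exploits the connectedness of the region $\Omega$. First I would record the harmless reductions: we may assume $K \neq \emptyset$; we may assume $\Omega \neq \mathbb{C}$, since a univalent entire map is affine and then $|f'|$ is constant; and we may assume $\Omega$ is connected, since this is what ``region'' means and it is genuinely needed — on a disconnected $\Omega$ one can prescribe a different affine scaling on each component, producing a univalent $f$ for which no such $M$ exists. Because $\Omega$ is open and connected it is path connected, so $K$ can be covered by finitely many open disks with closures in $\Omega$ whose centres may be joined by arcs inside $\Omega$; the union of those closed disks with the arcs is then a \emph{compact connected} set $K'$ with $K \subseteq K' \subseteq \Omega$, and I set $\delta = \tfrac{1}{2}\,\mathrm{dist}(K',\partial\Omega) > 0$.

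For the local step, fix a univalent $f$ on $\Omega$ and a point $p$ with $\mathrm{dist}(p,\partial\Omega) > \delta$, so that $D(p,\delta) \subseteq \Omega$; since $f$ is univalent we have $f'(p) \neq 0$, and
\[
g(\zeta) \;=\; \frac{f(p+\delta\zeta) - f(p)}{\delta\, f'(p)}, \qquad \zeta \in \mathbb{D}^{2},
\]
is univalent on the unit disk with $g(0)=0$ and $g'(0)=1$. By the Koebe distortion theorem — a classical companion to the one-quarter theorem of Lemma~\ref{lem:4}, following from the area theorem and Bieberbach's coefficient bound $|a_2| \leq 2$, and alternatively accessible through compactness of the schlicht class — there are absolute constants $0 < c_1 \leq c_2 < \infty$ with $c_1 \leq |g'(\zeta)| \leq c_2$ for all $|\zeta| \leq \tfrac{1}{2}$. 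Since $g'(\zeta) = f'(p+\delta\zeta)/f'(p)$, this says precisely that $|f'(p')|/|f'(p)| \in [c_1,c_2]$ whenever $|p'-p| \leq \tfrac{\delta}{2}$; writing $C = \max\{c_2,\,1/c_1\}$, we obtain $C^{-1} \leq |f'(p')|/|f'(p)| \leq C$ for all such $p,p'$, with $C$ independent of $f$ and of $p$.

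For the global step, cover $K'$ by finitely many open disks $B_1,\dots,B_N$ of radius $\tfrac{\delta}{4}$ centred at points of $K'$, so that every point of every $B_j$ lies at distance $> \delta$ from $\partial\Omega$ and the local estimate is applicable there. Because $K'$ is connected, the intersection graph on $\{B_1,\dots,B_N\}$ is connected, so any two points $z,w \in K \subseteq K'$ can be joined by a chain $z = q_0, q_1, \dots, q_m, q_{m+1} = w$ with $m \leq N-1$ in which consecutive points share a common $B_j$ and hence satisfy $|q_{\ell+1}-q_\ell| \leq \tfrac{\delta}{2}$. Multiplying the local estimate across the at most $N$ consecutive pairs gives $C^{-N} \leq |f'(z)|/|f'(w)| \leq C^{N}$, so $M = C^{N}$ works: here $C$ is absolute and $N$ depends only on $K$ and $\Omega$ through $K'$ and $\delta$, exactly as the statement requires.

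The part I expect to carry the real classical weight is the local step, which rests on the Koebe distortion theorem and hence ultimately on Bieberbach's coefficient inequality; the genuinely new and more delicate content is the globalisation — constructing the auxiliary compact connected set $K'$, checking that the intermediate chain points still lie at distance $> \delta$ from $\partial\Omega$ so that the local estimate legitimately applies to each consecutive pair, and verifying that the resulting constant $M = C^{N}$ is uniform over \emph{all} univalent $f$. The one subtlety I would be careful to flag explicitly is the connectedness hypothesis on $\Omega$, without which the conclusion fails.
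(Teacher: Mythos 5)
Your proof is correct. Note that the paper does not prove this lemma at all --- it is quoted verbatim from Conway \cite[Theorem 14.7.16]{ref16} --- so there is no in-paper argument to compare against; what you have written is the standard textbook proof (normalise on a disk of radius $\delta$ to reduce to the schlicht class, apply the Koebe distortion bounds $\tfrac{1-|\zeta|}{(1+|\zeta|)^{3}}\leq|g'(\zeta)|\leq\tfrac{1+|\zeta|}{(1-|\zeta|)^{3}}$ on $|\zeta|\leq\tfrac12$, then chain across a finite cover of a compact connected $K'\supseteq K$), and all the details --- the distance bookkeeping for the chain points, the connectedness of the nerve of the cover, and the necessity of $\Omega$ being connected --- check out.
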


\subsection{Preliminary estimates}	

In this section, we prove several key technical lemmas to support our proof of Theorem \ref{thm:A}. The following lemma is well-known but the proof is often omitted in the literature and so for completeness we include  its simple deduction from Lemma \ref{lem:4}.
\begin{lemma}\label{pros4.4}
	Let $ f $ be a univalent map from $ \Omega $ to $ \Omega' $, where $ \Omega $ and $ \Omega' $ are simply connected domains. Then for any point $ z \in \Omega $, we have
	$$\dfrac{1}{4}\textup{dist}\left( f( z) ,\partial \Omega'\right) \leq |f'( z) | \textup{dist}\left( z,\partial\Omega \right) \leq 4 \textup{dist}\left( f( z) ,\partial \Omega'\right).$$
\end{lemma}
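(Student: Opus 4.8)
The plan is to deduce this from the Koebe One-Quarter Theorem (Lemma \ref{lem:4}) by a standard normalisation argument, applying it in both directions (to $f$ and to $f^{-1}$).

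\medskip

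First I would fix $z \in \Omega$ and write $w = f(z) \in \Omega'$, and set $\rho = \textup{dist}(z, \partial\Omega)$ and $\rho' = \textup{dist}(w, \partial\Omega')$. Since $\Omega$ is open, the disk $B(z,\rho)$ is contained in $\Omega$, so $f$ is univalent on $B(z,\rho)$. Define the normalised function
$$g(\zeta) = \frac{f(z + \rho\zeta) - w}{\rho f'(z)}$$
for $\zeta \in \mathbb{D}^2$. Then $g$ is analytic and univalent on $\mathbb{D}^2$ with $g(0) = 0$ and $g'(0) = 1$, so Lemma \ref{lem:4} gives $B(0, 1/4) \subseteq g(\mathbb{D}^2)$. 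Untangling the definition, this says $B\big(w, \tfrac14 \rho |f'(z)|\big) \subseteq f(B(z,\rho)) \subseteq \Omega'$, hence $\tfrac14 \rho |f'(z)| \leq \textup{dist}(w, \partial\Omega') = \rho'$. This establishes the left-hand inequality $\tfrac14 \textup{dist}(f(z),\partial\Omega') \leq |f'(z)| \textup{dist}(z,\partial\Omega)$.

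\medskip

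For the right-hand inequality, I would apply the same argument to the inverse map $f^{-1}: \Omega' \to \Omega$, which is also univalent between simply connected domains, at the point $w \in \Omega'$. The left-hand inequality applied to $f^{-1}$ gives $\tfrac14 \textup{dist}(f^{-1}(w), \partial\Omega) \leq |(f^{-1})'(w)| \textup{dist}(w, \partial\Omega')$, that is, $\tfrac14 \rho \leq |(f^{-1})'(w)| \rho'$. Since $(f^{-1})'(w) = 1/f'(z)$, this rearranges to $|f'(z)| \rho \leq 4\rho'$, which is exactly the right-hand inequality.

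\medskip

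There is no real obstacle here; the only point requiring a small amount of care is the use of simple connectivity, which is needed so that $f^{-1}$ is well-defined and single-valued on all of $\Omega'$ (and analytic, by the open mapping theorem / inverse function theorem for univalent maps), allowing the symmetric application of Koebe. One should also note the degenerate cases: if $\Omega = \mathbb{C}$ then $\partial\Omega = \varnothing$ and $\textup{dist}(z,\partial\Omega) = \infty$, but then by Liouville-type reasoning a univalent $f$ onto a proper subdomain cannot exist unless $\Omega'=\mathbb{C}$ too, in which case $f$ is affine and both sides are infinite; in the intended application $\Omega, \Omega'$ are bounded, so this is not an issue and can be dispensed with in a line.
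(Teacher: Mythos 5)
Your approach is exactly the paper's: normalise so that the Koebe one--quarter theorem (Lemma \ref{lem:4}) applies, use it once for $f$ and once for $f^{-1}$. The substance is right, but you have the two inequalities swapped throughout. What Koebe applied to $f$ gives, namely $\tfrac14\rho|f'(z)|\leq\rho'$, is the \emph{right}-hand inequality $|f'(z)|\,\textup{dist}(z,\partial\Omega)\leq 4\,\textup{dist}(f(z),\partial\Omega')$, not the left-hand one as you claim at the end of your first step. Consequently, in your second step the correct instantiation of the Koebe argument for $f^{-1}$ at $w$ is $\tfrac14|(f^{-1})'(w)|\rho'\leq\rho$, i.e.\ $\rho'\leq 4|f'(z)|\rho$, which is the left-hand inequality; the formula you actually wrote, $\tfrac14\rho\leq|(f^{-1})'(w)|\rho'$, rearranges back to the right-hand inequality, so as written the left-hand inequality is never derived. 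This is purely a bookkeeping slip: once the labels are corrected the argument is complete and coincides with the paper's proof.
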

\begin{proof}
	Let $ r :=\textup{dist}\left( z,\partial\Omega \right)$, so that the disk $ D=\left\lbrace w:|w-z|<r\right\rbrace  $ is contained in $ \Omega $. Let $ g $ be the map from the unit disk $ \mathbb{D}^{2} $ to the disk $ D $ defined by 
	$$g(\theta)=z+r\theta.$$
	Then the map $ f\circ g $ is analytic and univalent on $ \mathbb{D}^{2} $ and 
	$$(f\circ g)(0)=f(z),\textup{ and } (f\circ g)'(0)=f'(z)r.$$
	By Lemma \ref{lem:4}, $ (f\circ g)(\mathbb{D}^{2}) $   contains a disk centred at $ f(z) $ with radii $ \frac{|f'(z)|r}{4} $, that is, 
	$$\textup{dist}\left( f(z),\partial (f\circ g)(\mathbb{D}^{2}) \right)\geq \dfrac{|f'(z)|r}{4}.$$
	Since $(f\circ g)(\mathbb{D}^{2})\subseteq \Omega'  $, we know
	$$\textup{dist}\left( f( z) ,\partial \Omega'\right)\geq\textup{dist}\left( f(z),\partial (f\circ g)(\mathbb{D}^{2}) \right)\geq \dfrac{|f'(z)|r}{4},$$
	that is,
	$$ |f'( z) | \textup{dist}\left( z,\partial\Omega \right) \leq 4 \textup{dist}\left( f( z) ,\partial \Omega'\right).$$
	Similarly, by applying the same argument to the inverse map $ f^{-1} $, we obtain
	$$\dfrac{1}{4}\textup{dist}\left( f( z) ,\partial \Omega'\right) \leq |f'( z) | \textup{dist}\left( z,\partial\Omega \right) $$
	as required.
\end{proof}
We will need the conclusion of Lemma \ref{lem:1} to hold for $A=E$ and the following lemma allows that.  This is the result which ensures the preimages of $E$ actually accumulate on $J$ and so the orbital set parallels the orbital sets previously studied in the IFS and Kleinian group settings. 
\begin{lemma}\label{bimpliesa}
For all  $ z $ in $ F $, the sequence $ \left\lbrace T^{n}( z) :n\geq1\right\rbrace  $ does not accumulate at any point of $E$.  In particular, for any open set $ V $ which contains $ J $, $ T^{-n}( E) \subseteq V $ for all sufficiently large $ n. $
\end{lemma}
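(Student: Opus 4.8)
The key point is that the hypothesis of Lemma \ref{lem:1} is exactly the statement that for all $z \in F$, the forward orbit $\{T^n(z) : n \geq 1\}$ does not accumulate at any point of $A$; so once I verify this for $A = E$, the "in particular" clause follows immediately by applying Lemma \ref{lem:1} to $A = E$. Thus the entire content of the lemma is the first sentence, and the strategy is to derive a contradiction from assumption (A).

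First I would suppose, for contradiction, that there is a point $w \in F$ and a sequence $n_k \to \infty$ with $T^{n_k}(w) \to x$ for some $x \in E$. Since $E \subseteq U$ and $U$ is open, $T^{n_k}(w)$ eventually lies in $U$. The crucial structural fact is that $U$ contains a point $y$ of the Julia set $J$ (by (A), $U \cap J \neq \emptyset$). The plan is to exploit the fact that the iterates $T^n$ restricted to $U$, or to a subdomain of $U$, cannot be normal: because $U$ meets $J$, no neighbourhood of $y$ is contained in the Fatou set, so the family $\{T^n\}$ fails to be normal on any open subset of $U$ containing $y$. On the other hand, I would like to argue that the accumulation of the orbit of $w$ on $E \subseteq U$ forces some normality-type behaviour, or more directly, I would invoke the standard fact (Montel / the theory of exceptional points, cf.\ \cite[Theorem 4.1.2, Theorem 4.2.7]{ref1}) that if $x$ is not an exceptional point then $J \subseteq \overline{O_T(x)}$, and relate this to the forward orbit of $w$ accumulating at $x$. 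The cleanest route is: the set of points whose forward orbit accumulates somewhere in $U$ is essentially controlled because $U$ meets $J$ — I would use that $U$ contains a non-empty open set on which $\{T^n\}$ is not normal (any open set meeting $J$ has this property), combined with Montel's theorem, to show that the backward orbit of a point in $J \cap U$ is dense in $J$, and then transfer this to a contradiction with the Fatou-set membership of $w$: a point whose forward orbit accumulates at $x \in U$ would, together with the failure of normality near $y \in J \cap U$, force $w$ into $J$.

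A more robust way to organize this, which I expect to be what the authors do: the assumption $U \cap P(T) = \emptyset$ lets us take univalent inverse branches of all iterates on $U$ (as set up in Section \ref{reductions}); so choosing $y \in J \cap U \setminus P(T)$, every $S_{n,j}$ is defined and univalent on a neighbourhood of $y$. If the forward orbit of $w \in F$ accumulates at $x \in E \subseteq U$, then passing to preimages via these inverse branches, one produces points in the Fatou set accumulating on points of the backward orbit of $x$; since $x$ is non-exceptional (it lies in $U$ which avoids $P(T)$ and meets $J$, and exceptional points lie in $F$ — one needs $x \notin$ the two possible exceptional points, which can be arranged or handled separately since they contribute only finitely many points), the closure of the backward orbit of $x$ contains $J$, and in particular contains $y$, forcing $w$ or a related point to lie in $\overline{F} \cap J$ in a way that contradicts $J = \partial F$ being nowhere dense relative to $F$ in the wrong direction.

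The main obstacle I anticipate is making the normality/Montel argument precise without circularity: the statement "the forward orbit of a Fatou point cannot accumulate on $E$" feels close to the definition of the Fatou set, but the subtlety is that $E$ is a whole compact set, not a single point, and $E$ itself need not meet $J$ — it is only that $E$ sits inside $U$ which does meet $J$. So the real work is leveraging $U \cap J \neq \emptyset$ rather than any property of $E$ alone; indeed the conclusion would be false for a general compact $E$ disjoint from such a $U$ (e.g.\ if $E$ contains an attracting fixed point). I expect the proof to isolate a point $y \in U \cap J$, use that $U$ is a connected open set meeting $J$ so that $U \not\subseteq F$, and then apply \cite[Theorem 4.2.8]{ref1} (Lemma \ref{lem:1}) in contrapositive form or invoke the blowing-up property of $J$ directly: since $U$ is an open set meeting $J$, the forward iterates $T^n(U)$ eventually cover $\mathbb{C}_\infty$ minus the exceptional set, hence meet $F$ in a controlled way — and the accumulation hypothesis on the orbit of $w$ would then be incompatible with $w$ staying in a Fatou component. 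Once the contradiction is reached the proof is complete, and the final sentence is a one-line application of Lemma \ref{lem:1}.
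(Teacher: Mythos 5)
Your reduction of the second sentence to Lemma \ref{lem:1} with $A=E$ is exactly right, and your instinct that the whole content lies in the first sentence, with $U\cap P(T)=\emptyset$ and $U\cap J\neq\emptyset$ doing the work, is also correct. However, none of the routes you sketch actually closes the argument, and the missing ingredient is specific: the \emph{classification of Fatou components}. An accumulation point $u$ of the forward orbit of a point $z\in F$ can only be (i) a point of an attracting or parabolic cycle, or (ii) a point of a Siegel disk or Herman ring (\cite[Theorem 3.2]{mcmullen}); and by \cite[Corollary 3.7]{mcmullen} the post critical set $P(T)$ contains every attracting and parabolic cycle and the \emph{boundary} of every rotation domain. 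Case (i) is then excluded for $u\in E$ because $E\subseteq U$ and $U\cap P(T)=\emptyset$; case (ii) is excluded because $U$ is connected and meets $J$, so it would have to cross the boundary of the rotation domain, which lies in $P(T)$. This is the paper's proof, and your proposal never invokes the classification; without it the argument cannot work.

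Indeed, the alternatives you offer do not suffice on their own. Density of the backward orbit of a non-exceptional $x$ in $J$ is perfectly compatible with forward orbits of Fatou points accumulating at $x$: for $T(z)=z^2-1$ the attracting $2$-cycle $\{0,-1\}$ consists of non-exceptional points whose backward orbits are dense in $J$, yet every nearby Fatou orbit accumulates there; what excludes such points from $E$ is solely that attracting cycles lie in $P(T)$. Likewise, Montel/blowing-up arguments on $U$ (iterates of an open set meeting $J$ eventually cover everything except the exceptional set) say nothing about where orbits \emph{starting in the Fatou set} can accumulate, and they do not touch the Siegel disk/Herman ring case at all, which is precisely where the connectivity of $U$ is needed. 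You flag yourself that you cannot make the normality argument precise without circularity; the resolution is not a sharper normality argument but the structure theory of limits of orbits in the Fatou set.
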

\begin{proof}
Let  $ z $ in $ F $ and let $u$ be an accumulation point of the sequence $ \left\lbrace T^{n}( z) :n\geq1\right\rbrace  $.  Due to the classification of Fatou components \cite[Theorem 3.2]{mcmullen} there are only a few possibilities.  Indeed, $u$ must belong to an attracting cycle, a parabolic cycle, or must belong to a Siegel disk or a Herman ring.  By \cite[Corollary 3.7]{mcmullen} $P(T)$ contains all attracting or parabolic  cycles  and the boundary of every Siegel disk or Herman ring.  In particular, since $E \subseteq U$ and $U \cap P(T) = \emptyset$,  this means $E$ cannot intersect any attracting or parabolic cycles.  Therefore, if $u \in E$, the only possibility remaining is for it to belong to a Siegel disk or Herman ring.  However, since $U \cap J \neq \emptyset$, this is impossible because $P(T)$ contains the boundary of such components and so $U$ cannot cross this (connected) boundary to intersect $J$, recalling that $U$ is connected. The final conclusion follows by applying Lemma \ref{lem:1} with $A=E$.
\end{proof}

The next result allows us to control the covering number of a pre-image of $E$ by an appropriately rescaled covering number of $E$ itself.  
\begin{lemma}\label{pros4.1}
For integer  $n \geq 1$,   $ \delta \in \left( 0,1\right)$, and $j=1,\dots,d^{n}$,
		\begin{align}
		N_{\delta}(S_{n,j}( E) ) \lesssim N_{\frac{\delta}{ |S'_{n,j} ( E) | }}\left(  E\right),  \tag{3.1} \label{eq:main3.1}
	\end{align} 
	where the implicit constant is independent of $ n $, $j$ and $\delta$.
\end{lemma}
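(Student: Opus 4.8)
The plan is to use the Koebe distortion theorem (Lemma \ref{lem:2}) to show that each univalent inverse branch $S_{n,j}$ has nearly constant derivative on a fixed compact neighbourhood of $E$, and hence distorts distances by a bounded multiple of $|S'_{n,j}(E)|$; a cover of $E$ at scale $\delta/|S'_{n,j}(E)|$ then pushes forward to a cover of $S_{n,j}(E)$ at scale comparable to $\delta$. The key point is that the distortion constant can be taken \emph{uniform} in $n$ and $j$, which is exactly why we arranged in subsection \ref{reductions} for every $S_{n,j}$ to be univalent on the single fixed simply connected domain $U$ (not merely on $E$).

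First I would fix an auxiliary compact set $K$ with $E \subseteq \mathrm{int}(K)$ and $K \subseteq U$; such a $K$ exists since $E$ is compact and $U$ is open. Applying Lemma \ref{lem:2} with $\Omega = U$ and this $K$ gives a constant $M = M(K) \geq 1$, depending only on $K$ (hence on $E$ and $U$) and not on the particular univalent function, such that $M^{-1} \leq |f'(z)|/|f'(w)| \leq M$ for all $z,w \in K$ and all univalent $f$ on $U$. In particular this applies to every $S_{n,j}$, so for every $z \in E$ we have $|S'_{n,j}(z)| \leq M\,|S'_{n,j}(w)|$ for any $w \in E$, and choosing $w$ to be the maximiser we get $|S'_{n,j}(z)| \approx |S'_{n,j}(E)|$ with implicit constants independent of $n,j$.

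Next I would convert the derivative bound into a distance bound. Given any two points $z,w \in E$, the segment or any short path between them lies in $K$ once we shrink to a small enough scale (or one can simply use convexity of small balls inside $\mathrm{int}(K)$), so integrating $S'_{n,j}$ along such a path and using the uniform bound above yields $|S_{n,j}(z) - S_{n,j}(w)| \leq C\,|S'_{n,j}(E)|\,|z-w|$ for a constant $C$ independent of $n,j$; more carefully, one applies this on each ball of a cover of $E$ by sets of diameter $\delta/|S'_{n,j}(E)|$ (which is $<1$, hence small) so that the relevant paths stay inside $K$. Concretely, take an optimal cover of $E$ by $N_{\delta/|S'_{n,j}(E)|}(E)$ sets each of diameter at most $\delta/|S'_{n,j}(E)|$; their images under $S_{n,j}$ cover $S_{n,j}(E)$ and each has diameter at most $C\delta$, so they can be refined into a bounded number of sets of diameter at most $\delta$, giving $N_\delta(S_{n,j}(E)) \lesssim N_{\delta/|S'_{n,j}(E)|}(E)$ as claimed.

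The main obstacle — really the only subtlety — is making sure the paths (or balls) used to integrate the derivative genuinely stay inside the fixed compact set $K$ where the Koebe bound $M$ applies, rather than wandering near $\partial U$; this is handled by working at scales $\delta/|S'_{n,j}(E)| < 1$ and by first passing to the neighbourhood $K$, exploiting that cover elements of diameter less than $\mathrm{dist}(E,\partial K)$ centred near $E$ lie in $K$. (If one prefers to avoid even this, one can instead cover $E$ by a fixed finite family of small convex balls $B_1,\dots,B_m$ with $\overline{B_i} \subseteq U$, prove the estimate on each $B_i \cap E$ with constants depending only on $B_i$, and take the maximum; the finiteness of the family keeps the constants uniform in $n,j,\delta$.) Everything else is a direct application of Lemma \ref{lem:2} together with the elementary fact that a set of diameter at most $C\delta$ can be covered by at most $\lceil C \rceil^2$ sets of diameter at most $\delta$.
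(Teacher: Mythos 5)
Your proof is correct and follows essentially the same route as the paper: push forward an optimal cover of $E$ at scale $\delta/|S'_{n,j}(E)|$ whose elements stay inside a fixed compact subset of $U$, and use the Koebe distortion theorem (Lemma \ref{lem:2}) on that compact set to make the distortion constant uniform in $n$, $j$ and $\delta$. Your extra care about keeping the integration paths inside the compact set is a point the paper's proof passes over silently when it bounds $\textup{diam}\left(S_{n,j}(U_\alpha)\right)$ by $\sup_{z\in\overline{U_\alpha}}|S'_{n,j}(z)|\cdot\textup{diam}\left(U_\alpha\right)$, so it only strengthens the argument.
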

\begin{proof}
	For any $j=1,\dots,d^{n}  $, let $ \left\lbrace U_{\alpha}\right\rbrace_\alpha  $ be a $\frac{\delta}{ |S'_{n,j} ( E) | } $-cover of $ E $ such that $\overline{U_{\alpha}} \subseteq U' \subseteq U $ for some compact set $U'$ not depending on $n$ or $\delta$.  This can be achieved since $E \subseteq U$ with $E$ compact and $U$ open. Recall that $U$ is the fixed simply connected open set containing $E$, described in subsection \ref{reductions}.  We know 
	$$\textup{diam}\left(S_{n,j}( U_{\alpha}) \right) \leq \sup_{z\in \overline{U_{\alpha}}}|S'_{n,j}( z)|\cdot\textup{diam}\left(U_{\alpha}\right) .$$
	According to Lemma \ref{lem:2}, 
	$$\sup_{z\in \overline{U_{\alpha}}}|S'_{n,j}( z)|\approx \sup_{z\in \overline{U_{\alpha}}\cap E}|S'_{n,j}( z)| $$
	with the implicit constant depending on $U'$ but not on $n$ or $\delta$. 	Then
	$$\textup{diam}\left(S_{n,j}( U_{\alpha}) \right)\lesssim \sup_{z\in \overline{U_{\alpha}}\cap E}|S'_{n,j}( z)|\cdot\textup{diam}\left(U_{\alpha}\right)\leq |S'_{n,j}( E)  |\cdot\textup{diam}\left(U_{\alpha}\right) .$$
	As $ \left\lbrace U_{\alpha}\right\rbrace  $ is a $\frac{\delta}{ |S'_{n,j} ( E) | } $-cover of $ E ,$ then 
	$$\textup{diam}\left(S_{n,j}( U_{\alpha}) \right)\lesssim\delta.$$
	Therefore
	$$N_{\delta}(S_{n,j}( E) ) \lesssim N_{\frac{\delta}{ |S'_{n,j} ( E) | }}\left(  E\right), $$
	as required.	 
\end{proof}

The next result allows us to control the derivatives of inverse branches in terms of how close they map $E$ to the Julia set.  
\begin{lemma}\label{pros4.2}
	Let $ \delta \in \left( 0,1\right)  $ and suppose that for some integers $ n \geq 0 $ and $j=1, \dots, d^n$,  $ S_{n,j}(E)\nsubseteq J_{\delta} $. 	Then  
	\begin{align*}
		\frac{\delta}{ |S'_{n,j}( E) | }  \lesssim  1, \tag{3.2} 
	\end{align*} 
	where the implicit constant is independent of $ n $, $j$ and $\delta$.
\end{lemma}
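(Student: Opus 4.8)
The plan is to exploit the assumption $S_{n,j}(E) \not\subseteq J_\delta$ to find a point of $E$ whose image under $S_{n,j}$ is far (distance at least $\delta$) from $J$, and then feed this into the distortion estimate of Lemma \ref{pros4.4}, applied with $\Omega = U$ and $\Omega' = S_{n,j}(U)$. Concretely, since $S_{n,j}(E) \not\subseteq J_\delta$, there is a point $x \in E$ with $\operatorname{dist}(S_{n,j}(x), J) \geq \delta$. I would like to say that $\operatorname{dist}(S_{n,j}(x), \partial(S_{n,j}(U))) \gtrsim \delta$ as well; this needs the geometric fact that $\partial(S_{n,j}(U)) \subseteq J$, or at least lies within a bounded distance of $J$ after rescaling. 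The point is that $S_{n,j}$ is an inverse branch of $T^n$, so $S_{n,j}(U)$ is a component of $T^{-n}(U)$, and the boundary of such a component is contained in $T^{-n}(\partial U)$; but $U$ itself is essentially fixed and $\partial U$ is at a fixed positive distance from $E$, so iterating backwards keeps the boundary bounded away — actually the cleaner route is to use that $T^{-n}(U)$ eventually lies in any neighbourhood of $J$ (Lemma \ref{bimpliesa} / Lemma \ref{lem:1}), hence $\partial(S_{n,j}(U)) \subseteq T^{-n}(\partial U)$ is contained in a fixed compact neighbourhood of $J$, which after passing to the scale $\delta < 1$ gives $\operatorname{dist}(S_{n,j}(x), \partial(S_{n,j}(U))) \gtrsim \delta$ — but one must be careful here, so I will spell this out below.

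The main technical step: by Lemma \ref{pros4.4} applied to the univalent map $S_{n,j} \colon U \to S_{n,j}(U)$ at the point $x \in E \subseteq U$,
\[
|S'_{n,j}(x)| \operatorname{dist}(x, \partial U) \;\geq\; \tfrac{1}{4} \operatorname{dist}(S_{n,j}(x), \partial(S_{n,j}(U))).
\]
Since $E$ is compact and contained in the open set $U$, the quantity $\operatorname{dist}(x, \partial U)$ is bounded above by $\operatorname{diam}(U)$ (or just by a fixed constant, as $U$ may be taken bounded after the reductions in subsection \ref{reductions}), so the left side is $\lesssim |S'_{n,j}(x)| \leq |S'_{n,j}(E)|$. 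For the right side, I claim $\operatorname{dist}(S_{n,j}(x), \partial(S_{n,j}(U))) \gtrsim \delta$. Indeed $S_{n,j}(x) \in T^{-n}(E)$, which by Lemma \ref{bimpliesa} lies in a fixed bounded neighbourhood $V$ of $J$ once $n$ is large (and for the finitely many small $n$ we absorb everything into the implicit constant), while $\partial(S_{n,j}(U))$ is separated from this region: more precisely, $\operatorname{dist}(S_{n,j}(x), J) \geq \delta$ by choice of $x$, and since $\partial U$ is at fixed distance from $J$ (one can arrange $U$ to have this property, or argue that $T^{-n}$ contracts distances near $J$ so the preimage boundary cannot come $\delta$-close to $S_{n,j}(x)$ when $\delta$ is small), we get $\operatorname{dist}(S_{n,j}(x), \partial(S_{n,j}(U))) \geq c\delta$ for an absolute constant $c$. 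Combining, $|S'_{n,j}(E)| \gtrsim \delta$, i.e. $\delta / |S'_{n,j}(E)| \lesssim 1$, as required. The case $n=0$ is trivial since $S_{0,1}$ is the identity and $\delta < 1$.

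The hard part will be justifying the lower bound $\operatorname{dist}(S_{n,j}(x), \partial(S_{n,j}(U))) \gtrsim \delta$ uniformly in $n$ and $j$ — this is where the geometry of the inverse branch interacts with the position of $J$. The honest way to handle it is to observe that $S_{n,j}(x) \in T^{-n}(E)$, that $\partial(S_{n,j}(U)) \subseteq J$ would be false in general (the component of $T^{-n}(U)$ can have boundary away from $J$), so instead one uses Lemma \ref{pros4.4} in the form that relates $\operatorname{dist}(x,\partial U)$ and $\operatorname{dist}(S_{n,j}(x), \partial S_{n,j}(U))$ together with the crude bound $\partial S_{n,j}(U) \supseteq$ boundary points mapping into $\partial U$, and then notes that since $x$ ranges over the compact set $E \subseteq U$, $\operatorname{dist}(x,\partial U) \asymp 1$ (bounded above and below by constants depending only on $E$ and $U$), so $|S'_{n,j}(x)| \asymp \operatorname{dist}(S_{n,j}(x), \partial S_{n,j}(U)) \geq \operatorname{dist}(S_{n,j}(x), J) - \operatorname{dist}(J, \partial S_{n,j}(U))$; controlling the subtracted term is the crux and I expect it to follow because $T^{-n}(\partial U)$, being the backward orbit of a compact set disjoint from $J$, is eventually swept into any prescribed neighbourhood of $J$ — but at a scale independent of $\delta$, which combined with $\delta < 1$ suffices. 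I would present this carefully, possibly splitting into "$n$ large" and "$n$ small" and absorbing the finite exceptional set of $n$ into the implicit constant via the fact that $0 < |S'_{n,j}(E)| < \infty$ for each fixed $n,j$.
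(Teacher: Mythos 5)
There is a genuine gap at what you yourself identify as the crux: the uniform lower bound $\operatorname{dist}(S_{n,j}(x),\partial S_{n,j}(U))\gtrsim\delta$. Your proposed justification — that $\partial S_{n,j}(U)$ (or $T^{-n}(\partial U)$) is eventually swept into a fixed neighbourhood of $J$ ``at a scale independent of $\delta$, which combined with $\delta<1$ suffices'' — does not deliver this. If all you know is that every boundary point $p$ satisfies $\operatorname{dist}(p,J)\leq c$ for a $\delta$-independent constant $c$, the triangle inequality only gives $\operatorname{dist}(S_{n,j}(x),\partial S_{n,j}(U))\geq \delta-c$, which is vacuous as soon as $\delta<c$; to run your argument you would need the boundary within $\delta/2$ of $J$, a $\delta$-dependent requirement that fails for fixed $n$ as $\delta\to 0$. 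Worse, a uniform-in-$n$ statement of the form ``$S_{n,j}(x)$ far from $J$ forces $|S'_{n,j}|$ not too small'' is essentially the content of the lemma itself, so the missing step is circular. (Note also that $\partial U$ need not be disjoint from $J$ — assumption (A) only says $U$ \emph{meets} $J$ — so even the claim that $T^{-n}(\partial U)$ is the backward orbit of a compact set disjoint from $J$ is unjustified.)

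The ingredient you are not using is precisely the part of assumption (A) that makes the lemma work: there is a point $y\in J\cap U$. By total invariance of the Julia set, $S_{n,j}(y)\in J$. Taking $w_0\in E$ with $S_{n,j}(w_0)\notin J_\delta$, one gets directly
\[
\delta\;\leq\;|S_{n,j}(w_0)-S_{n,j}(y)|\;\leq\;\max_{w\in U'}|S'_{n,j}(w)|\cdot|w_0-y|\;\approx\;|S'_{n,j}(E)|\cdot\operatorname{diam}(E\cup J),
\]
where $U'$ is a fixed compact set with $E\cup\{y\}\subseteq U'\subseteq U$ and the comparison is the bounded distortion estimate of Lemma \ref{lem:2}. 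This avoids Lemma \ref{pros4.4} and any discussion of $\partial S_{n,j}(U)$ entirely. I would recommend abandoning the boundary-distance route and arguing via a marked point of $J\cap U$ as above.
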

\begin{proof}
	 By assumption (A) in Theorem \ref{thm:A}  there exists $y \in J \cap U $.   Let $U'$ be a fixed compact set with  $E \cup \{y\} \subseteq U' \subseteq U$, which exists since $U$ is open and $E \cup \{y\}$ is compact. 	Since $ S_{n,j}( E) \nsubseteq J_{\delta} ,$   there exists a point $ w_{0}\in E $  such that $ S_{n,j}( w_{0}) \notin J_{\delta}$. Since $ S_{n,j}( w_{0}) \notin J_{\delta}$ and using $T$-invariance of $J$ to get $S_{n,j}( y) \in J$, we have 
	\begin{align*}
	\delta\leq|S_{n,j}( w_{0}) -S_{n,j}( y)|&\leq\max_{w\in U' }| S'_{n,j}( w) |\cdot|w_{0}-y| \\
	&\approx |S'_{n,j}( E) | \cdot|w_{0}-y| \qquad \text{(by Lemma \ref{lem:2})}\\
	&\leq |S'_{n,j}( E) |\cdot\text{diam}\left( E\cup J\right).
	\end{align*} 			
	As $\text{diam}\left( E\cup J\right)  $ is finite, we conclude that
	$$\frac{\delta}{ |S'_{n,j}( E) | }  \lesssim  1$$
	as required. 		
\end{proof}

The next result is in a certain sense a converse to  Lemma \ref{pros4.2} because it ensures that if the image of $E$ under a particular inverse branch is very small, then it must be close to the Julia set.

\begin{lemma}\label{lem4.1}
	Let  $ z\in E\setminus J$ and  $ x\in T^{-n}(z) $ for some $n \geq 0$, and suppose that $ |(T^{n})'(x)|\geq\frac{1}{r} $ for some   $ r >0$. Then $ x $ is in the $ \lesssim r $ neighbourhood of $ J $.
\end{lemma}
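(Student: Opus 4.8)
# Proof Proposal for Lemma \ref{lem4.1}

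\textbf{Setup and strategy.} The statement asks us to show that if $z \in E \setminus J$, $x \in T^{-n}(z)$, and the iterate $T^n$ has large derivative at $x$ (namely $|(T^n)'(x)| \geq 1/r$), then $x$ lies within distance $\lesssim r$ of $J$. The natural approach is to reinterpret $x$ as the image of $z$ under one of the univalent inverse branches $S_{n,j}$ constructed in subsection \ref{reductions}, and then to apply the Koebe-type distortion estimate of Lemma \ref{pros4.4} on the simply connected domain $U$. Indeed, since $z \in E \subseteq U$ and $U$ contains no critical values of $T^n$, there is an inverse branch $S_{n,j}$ univalent on all of $U$ with $S_{n,j}(z) = x$; by the inverse function theorem $|S_{n,j}'(z)| = 1/|(T^n)'(x)| \leq r$.

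\textbf{Main steps.} First I would set $\Omega = U$ and $\Omega' = S_{n,j}(U)$, which are both simply connected domains, and apply Lemma \ref{pros4.4} to the univalent map $f = S_{n,j}$ at the point $z$:
$$\textup{dist}(x, \partial \Omega') \leq 4 |S_{n,j}'(z)| \, \textup{dist}(z, \partial U) \leq 4 |S_{n,j}'(z)| \, \textup{diam}(U) \lesssim r,$$
using that $\textup{dist}(z,\partial U)$ is bounded (we may shrink $U$ to a bounded neighbourhood of $E$ if necessary, or just use that $z$ lies in a fixed compact subset of $U$, as in the earlier lemmas). The second step is the key observation that $\partial \Omega' = \partial S_{n,j}(U)$ must contain a point of $J$ that is close to $x$. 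The reason is that $U \cap J \neq \emptyset$ by assumption (A), so pick $y \in U \cap J$; then $S_{n,j}(y) \in T^{-n}(y) \subseteq J$ by total invariance of $J$, so $S_{n,j}(y)$ is a point of $J$ lying in $S_{n,j}(U) = \Omega'$. However, what I actually need is a point of $J$ on the \emph{boundary} of $\Omega'$, or at least within $\lesssim r$ of $x$. Here I would argue via the Julia set being the closure of the backward orbit of $y$ (since $y \in J$ is non-exceptional), combined with the fact that $J$ cannot be contained in the open set $\Omega'$ entirely unless $\Omega'$ meets $\partial \Omega'$ near $J$ — more carefully: since $z \notin J$ but $z \in \partial \Omega$-adjacent in the sense that... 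Actually the cleanest route: apply Lemma \ref{pros4.4} twice, or note that $\textup{dist}(x, J) \leq \textup{dist}(x, S_{n,j}(J \cap U))$ and estimate the diameter of $S_{n,j}(J \cap U)$ together with $\textup{dist}(x, S_{n,j}(y))$ directly via the distortion bound of Lemma \ref{lem:2}, since $z, y$ both lie in a fixed compact subset of $U$:
$$\textup{dist}(x, J) \leq |x - S_{n,j}(y)| = |S_{n,j}(z) - S_{n,j}(y)| \leq \max_{w \in U'} |S_{n,j}'(w)| \cdot |z - y| \approx |S_{n,j}'(z)| \cdot |z-y| \lesssim r,$$
where $U'$ is a fixed compact set with $E \cup \{y\} \subseteq U' \subseteq U$ and the middle comparison is Lemma \ref{lem:2}. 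This is essentially the same computation as in Lemma \ref{pros4.2} run in reverse, and avoids any delicate boundary argument.

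\textbf{Expected obstacle.} The main subtlety is bounding $\max_{w \in U'} |S_{n,j}'(w)|$ by a constant multiple of $|S_{n,j}'(z)| = 1/|(T^n)'(x)| \leq r$ uniformly in $n$ and $j$ — this is exactly where Lemma \ref{lem:2} is indispensable, giving a distortion constant $M$ depending only on the fixed compact set $U' \subseteq U$ and not on the particular univalent branch $S_{n,j}$ or on $n$. One must be slightly careful that $z$ ranges over $E$ and $y$ is a fixed point of $J \cap U$, so both lie in the fixed compact set $U'$; then the comparison constant is uniform. A minor point is the degenerate case $n = 0$, where $S_{0,1}$ is the identity and $x = z$: here $|(T^0)'(x)| = 1 \geq 1/r$ forces $r \geq 1$, and since $\textup{dist}(z, J) \leq \textup{diam}(E \cup J) < \infty \lesssim 1 \leq r$, the conclusion holds trivially. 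No real difficulty is anticipated beyond carefully invoking the uniform distortion bound.
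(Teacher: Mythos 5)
Your final argument — bounding $\textup{dist}(x,J) \leq |S_{n,j}(z)-S_{n,j}(y)|$ for a fixed $y \in J\cap U$, using total invariance of $J$, the uniform distortion bound of Lemma \ref{lem:2} on a fixed compact $U'\subseteq U$, and $|S_{n,j}'(z)| = 1/|(T^n)'(x)| \leq r$ — is exactly the paper's proof, and the preliminary detour through Lemma \ref{pros4.4} that you correctly abandon is harmless. The proposal is correct and takes essentially the same approach as the paper.
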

\begin{proof}
	By assumption (A) in Theorem \ref{thm:A}  there exists $y \in J \cap U $.  Again, let $U'$ be a fixed compact set with  $E \cup \{y\} \subseteq U' \subseteq U$, which exists since $U$ is open and $E \cup \{y\}$ is compact.  Let $ x=S_{n,j}(z) $ for some $ j $, recalling that the inverse branches are defined on $U \supseteq E$. It is clear that 
	\begin{equation}
	|S'_{n,j}(z)|=\dfrac{1}{|(T^{n})'(x)|}\leq r,
		\tag{3.4}\label{eq:main3.4}
	\end{equation}
	by the inverse function theorem.   Then
	\begin{align*}
		\textup{dist}\left( x, J\right)
		& \leq |S_{n,j}(z)-S_{n,j}(y)|\\
		&\leq\sup_{u\in U'}|S'_{n,j}(u)||z-y|\\
		&\approx |S'_{n,j}(z)||z-y|~~\left( \text{by Lemma }\ref{lem:2}\right) \\
		&\leq r|z-y|~~\left( \text{by } (\ref{eq:main3.4})\right)  .
	\end{align*}
Since $ |z-y| \lesssim  1$, we know $ x $ is in the $ \lesssim r $ neighbourhood of $ J $.
\end{proof}

Next we aim to bound the number of pre-images close to the Julia set in terms of an appropriate covering number of the Julia set.  This will rely on Lemma \ref{lem4.1} above.

\begin{lemma}\label{lem4.3}
Let  $ z\in E\setminus J$. Then, for integers $ n \geq 0 $,
  	$$\#\left\lbrace \left( k,x\right) :k\in \mathbb{N} , x\in T^{-k}(z), 2^{n}\leq|(T^{k})'(x)|\leq2^{n+1} \right\rbrace \lesssim N_{2^{-n}}(J).$$
  	\end{lemma}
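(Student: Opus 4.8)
The plan is to exploit the fact that any pre-image $x \in T^{-k}(z)$ with $2^n \le |(T^k)'(x)| \le 2^{n+1}$ automatically lies close to $J$, by Lemma \ref{lem4.1}, and then to convert "many such pre-images" into "many disjoint regions near $J$ of a controlled size", which is exactly what an appropriate covering number of $J$ bounds. First I would apply Lemma \ref{lem4.1} with $r = 2^{-n}$: since $|(T^k)'(x)| \ge 2^n = 1/r$, every $x$ in the set being counted lies in the $\lesssim 2^{-n}$ neighbourhood of $J$. So all the counted pre-images live in $J_{C 2^{-n}}$ for some absolute constant $C$.

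Next I would fix such an $x$, say $x = S_{k,j}(z)$ for the corresponding inverse branch, and use the Koebe-type estimate (Lemma \ref{pros4.4}) together with the one-quarter theorem to show that $S_{k,j}$ maps a definite-sized neighbourhood of $z$ (inside $U$, where the branch is univalent) onto a set containing a disk around $x$ of radius $\approx |S_{k,j}'(z)| = 1/|(T^k)'(x)| \approx 2^{-n}$. Indeed, by Lemma \ref{pros4.4} applied to $S_{k,j}: U \to S_{k,j}(U)$ at the point $z$, we get $\mathrm{dist}(x, \partial S_{k,j}(U)) \gtrsim |S_{k,j}'(z)|\,\mathrm{dist}(z,\partial U) \gtrsim |S_{k,j}'(z)| \approx 2^{-n}$, using that $\mathrm{dist}(z, \partial U)$ is bounded below by a positive constant independent of $k,j$ (since $z \in E \subseteq U$ with $E$ compact). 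Hence $x$ is the centre of a disk $D_x \subseteq S_{k,j}(U)$ of radius $\gtrsim 2^{-n}$.

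The key point is then a disjointness / bounded-overlap argument. The images $S_{k,j}(U)$ over all pairs $(k,j)$ appearing are pairwise disjoint, because distinct inverse branches of all iterates have disjoint ranges when everything is a single-valued univalent continuation over the simply connected $U$ — more carefully, if two such images overlapped one could follow the branches to produce a point with two distinct images under some common $T^m$, contradicting that the branches take distinct values; this is the standard fact that the $S_{k,j}$ "tile" the backward orbit. So the disks $D_x$ have pairwise disjoint interiors (being contained in pairwise disjoint sets), each has radius $\gtrsim 2^{-n}$, and each is centred within $J_{C2^{-n}}$, hence contained in $J_{C'2^{-n}}$. A volume/packing count in the plane then shows the number of such disks is $\lesssim N_{2^{-n}}(J_{C'2^{-n}}) \lesssim N_{2^{-n}}(J)$, where the last comparison is the standard fact that covering numbers of a bounded set and of its $\delta$-neighbourhood at scale $\delta$ are comparable up to an absolute constant. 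This gives the claimed bound, with all implicit constants independent of $n$ (they depend only on $U$, $E$, and $J$, which are fixed).

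The main obstacle I expect is establishing the disjointness of the ranges $S_{k,j}(U)$ cleanly across \emph{all} iterates $k$ simultaneously (not just for fixed $k$), and doing the bounded-overlap count correctly — one must be a little careful because a priori the same point $x$ could be reached as $S_{k,j}(z)$ for several $(k,j)$ only if $z$ itself had non-trivial pre-image coincidences, but in the counting set each pair $(k,x)$ is counted once and distinct pairs give genuinely distinct branches; the subtlety is purely in phrasing why the corresponding disks cannot pile up, which is where disjointness of branch ranges (or, alternatively, a direct argument that any point of the plane lies in boundedly many of the $D_x$) does the work. Everything else is a routine application of the distortion theorems already recorded.
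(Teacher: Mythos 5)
Your overall strategy (push every counted pre-image into the $\lesssim 2^{-n}$ neighbourhood of $J$ via Lemma \ref{lem4.1}, attach to each one a disk of radius $\gtrsim 2^{-n}$ via the Koebe estimates, and then pack) is the same as the paper's, and the steps involving Lemma \ref{lem4.1}, Lemma \ref{pros4.4} and the final packing-versus-covering count are all fine. The gap is exactly at the point you flagged as the main obstacle: the claimed pairwise disjointness of the ranges $S_{k,j}(U)$ \emph{across different iterates} $k$ is false, and the argument you offer for it does not work. If $w \in S_{m_1,j}(U) \cap S_{m_2,j'}(U)$ with $m_1 > m_2$, then $T^{m_1}(w) \in U$ and $T^{m_2}(w) \in U$; these are images under \emph{different} iterates, so there is no "point with two distinct images under a common $T^m$" and no contradiction. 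In fact such overlaps must occur for the set $U$ of the theorem: $U$ is open and meets $J$, backward orbits of points of $J$ are dense in $J$, so $T^{-m}(U) \cap U \neq \emptyset$ for infinitely many $m$, and hence some branch at level $m$ maps part of $U$ back into $U$. Disjointness of branch ranges is only automatic within a fixed level $k$ (there, $S_{k,1}(V) \cap S_{k,2}(V) \neq \emptyset$ really does force $S_{k,1} = S_{k,2}$ after applying $T^k$ and using uniqueness of analytic continuation).

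The repair requires genuine dynamical input, which is what the paper supplies. One replaces $U$ by a small ball $B(z,\rho)$ with the \emph{no-return property} $T^k(B(z,\rho)) \cap B(z,\rho) = \emptyset$ for all $k \geq 1$; this immediately gives $T^{-m_1}(B(z,\rho)) \cap T^{-m_2}(B(z,\rho)) = \emptyset$ for $m_1 \neq m_2$, and then your disk/packing argument goes through with $B(z,\rho)$ in place of $U$. Constructing such a $\rho$ is where assumption (A) is really used: since $z \in E \setminus J \subseteq F$, the classification of Fatou components forces the forward orbit of $z$ to accumulate on an attracting or parabolic cycle (Siegel disks and Herman rings are excluded because $U$ is connected, meets $J$, and avoids $P(T)$, which contains the boundaries of such components), $z$ itself is not on such a cycle because cycles lie in $P(T)$, and the diameters of $T^k(B(z,\rho_0))$ tend to zero along the orbit, so only finitely many iterates can return near $z$ and one shrinks $\rho$ accordingly. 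Without this step your "bounded overlap" alternative also fails --- in a Siegel disk, for instance, the orbit returns arbitrarily close to $z$ with bounded derivative, which is precisely the mechanism behind the counterexample in Proposition \ref{thm:B}(iv).
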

\begin{proof}
Let $ x $ and $ y $ be two distinct points in $ T^{-k}(z) $ for some $ k $ satisfying 
	
	\begin{equation}
		2^{n}\leq|(T^{k})'(x)|\leq2^{n+1} \quad \text{and} \quad 2^{n}\leq|(T^{k})'(y)|\leq2^{n+1}.
		\tag{3.5}\label{eq:main3.5}
	\end{equation}
We first prove that $ |x-y|\gtrsim \frac{1}{2^{n}}$.

Since $ z \in E \subseteq U $, we may choose $\kappa>0 $ such that $\overline{B(z,3\kappa)} \subseteq U$. In particular,    the inverse branch $ S_{k,j} $ is univalent on $ B(z,\kappa) $ for $ j=1,\dots,d^{k} $. Without loss of generality, assume that $ x=S_{k,1}(z) $ and $ y=S_{k,2}(z) .$ By Lemma \ref{pros4.4},
	$$4\textup{dist}\left( x, \partial S_{k,1}(B(z,\kappa) ) \right)\geq |S'_{k,1}(z)|\textup{dist}\left( z, \partial B(z,\kappa) \right)=|S'_{k,1}(z)|\kappa$$ 
	and $$4\textup{dist}\left( y, \partial S_{k,2}(B(z,\kappa) ) \right)\geq |S'_{k,2}(z)|\textup{dist}\left( z, \partial B(z,\kappa) \right)=|S'_{k,2}(z)|\kappa.$$
	Since $ S_{k,1}(B(z,\kappa) )\cap S_{k,2}(B(z,\kappa) )=\emptyset$ (otherwise, by analytic continuation, $ S_{k,1}(z) =S_{k,2}(z)$, which contradicts the definition of $S_{k,j}  $), it follows that
	\begin{align*}
		|x-y|&\geq \textup{dist}\left( x, \partial S_{k,1}(B(z,\kappa) ) \right)+\textup{dist}\left( y, \partial S_{k,2}(B(z,\kappa) ) \right)\\ 
		&\geq \dfrac{\kappa}{4}\dfrac{1}{2^{n}}~\left(\textup{by }\left( \ref{eq:main3.5}\right) \textup{~and the inverse function theorem} \right) \\
		&\gtrsim \dfrac{1}{2^{n}},
	\end{align*}
	as required. Using this separation estimate  and  that Lemma \ref{lem4.1} ensures the preimage of $z$ under $T^k$ is in the $\lesssim 2^{-n}$ neighbourhood of the Julia set, it follows that 
	\begin{equation}
		\#T^{-k}\left( z\right)\lesssim N_{2^{-n}}(J) .
		\tag{3.6}\label{eq:main3.6}
	\end{equation}
Next we consider the case when $x,y$ are not in a common preimage $T^{-k}(z)$.  First, choose $\rho>0$ sufficiently small such that
\[
T^k(B(z,\rho)) \cap B(z,\rho) = \emptyset
\]
for all $k \geq 1$.  To see why we can choose such a $\rho$, we use the classification of Fatou components   \cite[Theorem 3.2]{mcmullen} which says that the orbit $T^k(z)$ must accumulate on  an attracting cycle, a parabolic cycle, or else $z$ must belong to a  Siegel disk or a Herman ring.  By \cite[Corollary 3.7]{mcmullen} $P(T)$ contains   the boundary of every Siegel disk or Herman ring and therefore since $U \cap J \neq \emptyset$ and $z \in E \subseteq U$, $z$ cannot belong to a Herman ring or Siegel disk because $U$ cannot cross the (connected) boundary to intersect $J$, recalling that $U$ is connected. Moreover, $z$ itself cannot be in an attracting or parabolic cycle because $z \in E \subseteq U$ and $U \cap P(T) = \emptyset$ and, again by \cite[Corollary 3.7]{mcmullen}, $P(T)$  contains all parabolic and attracting cycles.  Now, choose $\rho_0>0$ small enough to ensure that $B(z,2\rho_0) \subseteq F$.  It follows that,  as $T^k(z)$ approaches a parabolic or attracting cycle,  the diameter of $T^k(B(z,\rho_0))$ must go to zero.  Therefore, only at most finitely many of the sets  $\{T^k(B(z,\rho_0))\}_{k \geq 1}$ intersect $B(z,\rho_0)$ and we are free to choose $\rho<\rho_0$ with the desired property. 

	Now, suppose that $ x\in T^{-m_{1}}(z) $ and $ y\in T^{-m_{2}}(z) $ where $ m_{1}\neq m_{2} $, and $ x $ and $ y $ satisfy (\ref{eq:main3.5}), where $ k $ in (\ref{eq:main3.5}) should be replaced by $m_{1}  $ and $ m_{2} $, respectively. Without loss of generality, assume that $m_{1}>m_{2} $.  We aim to show that $ |x-y|\gtrsim \frac{1}{2^{n}} $.  With the above argument in the $m_1=m_2$ case in mind,   it suffices to show that 
\[
T^{-m_{1}}(B(z,\rho))  \cap  T^{-m_{2}}(B(z,\rho)) = \emptyset.
\]
Suppose not and let $A = T^{-m_{1}}(B(z,\rho))  \cap  T^{-m_{2}}(B(z,\rho)) \neq \emptyset$.  Then
\[
T^{m_2}(A) \subseteq B(z,\rho)
\]
and 
\[
T^{m_1}(A)  = T^{m_1-m_2}(T^{m_2}(A))\subseteq B(z,\rho)
\]
but this contradicts the choice of $\rho$ and we may conclude that $ |x-y|\gtrsim \frac{1}{2^{n}} $.  The implicit constant here depends on $\rho$, but $\rho$ is a fixed constant independent of $n$. This separation  estimate, again combined with  with  Lemma \ref{lem4.1}, gives
$$\#\left\lbrace \left( k,x\right) :k\in \mathbb{N} , x\in T^{-k}(z), 2^{n}\leq|(T^{k})'(x)|\leq2^{n+1} \right\rbrace \lesssim N_{2^{-n}}(J)$$
completing the proof.
\end{proof}
Using Lemma  \ref{lem4.3}, we obtain the following result which establishes that the upper box dimension of the Julia set is at least the exponent of convergence of a certain series, somewhat analogous to the Poincar\'e exponent and Poincar\'e series from the theory of Kleinian groups.   This result, which  can be formulated so as to have nothing to do with $E$,  may be interesting in its own right. 
\begin{proposition}\label{pros4.3}
	 Let  $ z\in E \setminus J $   and let 
	$$h_{z}=\inf\left\lbrace s: \sum_{k=1 }^{\infty}\sum_{x\in T^{-k}(z)}|(T^{k})'(x)|^{-s}<\infty\right\rbrace .$$
	Then $ h_{z}\leq\overline{\dim}_{\mathrm{B}}J. $  This result can be made independent of $E$ by simply considering $E= \{z\}$ satisfying the assumptions of Theorem \ref{thm:A}.
\end{proposition}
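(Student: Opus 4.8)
The plan is to show that for every $s > \overline{\dim}_{\mathrm{B}}J$ the series $\sum_{k}\sum_{x \in T^{-k}(z)}|(T^k)'(x)|^{-s}$ converges; this immediately gives $h_z \le \overline{\dim}_{\mathrm{B}}J$. The key device is the dyadic decomposition of the preimages according to the size of the derivative $|(T^k)'(x)|$, which is exactly what Lemma \ref{lem4.3} is designed to control. First I would fix $s > \overline{\dim}_{\mathrm{B}}J$ and choose $t$ with $\overline{\dim}_{\mathrm{B}}J < t < s$. By definition of upper box dimension, there is a constant $C>0$ with $N_{2^{-n}}(J) \le C \, 2^{nt}$ for all $n \ge 1$ (and $N_{2^{-n}}(J)$ is in any case bounded for $n \le 0$, or we simply note only large $n$ matter). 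Also note that since $z \in E \setminus J$ and $E$ is bounded away from $P(T)$, all the inverse branches are well defined and, crucially, $|(T^k)'(x)| \ge \lambda > 1$ for $k$ large by standard expansion along backward orbits avoiding the postcritical set — but to stay self-contained I would instead simply split off the finitely many terms with $|(T^k)'(x)| \le 1$ (there are only finitely many such pairs $(k,x)$ by Lemma \ref{lem4.3} applied to the single level $n = 0$ together with the observation that derivatives along preimages of a fixed non-Julia point cannot stay bounded, which itself follows from Lemma \ref{lem4.1}: boundedly many preimages would have to lie in a fixed neighbourhood of $J$, but $z \notin J$), and concentrate on the tail where $|(T^k)'(x)| > 1$.

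Next, for the main estimate I would group the pairs $(k,x)$ with $x \in T^{-k}(z)$ and $2^n \le |(T^k)'(x)| \le 2^{n+1}$ for $n \ge 0$. By Lemma \ref{lem4.3}, the number of such pairs is $\lesssim N_{2^{-n}}(J) \le C\,2^{nt}$. On each such pair, $|(T^k)'(x)|^{-s} \le 2^{-ns}$. Hence the contribution of the $n$-th dyadic block to the series is
\[
\sum_{\substack{(k,x):\, x \in T^{-k}(z) \\ 2^n \le |(T^k)'(x)| \le 2^{n+1}}} |(T^k)'(x)|^{-s} \;\lesssim\; 2^{nt} \cdot 2^{-ns} \;=\; 2^{-n(s-t)},
\]
and summing over $n \ge 0$ gives a convergent geometric series since $s - t > 0$. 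Adding back the finitely many omitted terms with $|(T^k)'(x)| \le 1$ does not affect convergence, so $\sum_k \sum_{x} |(T^k)'(x)|^{-s} < \infty$, whence $h_z \le s$. Letting $s \downarrow \overline{\dim}_{\mathrm{B}}J$ yields $h_z \le \overline{\dim}_{\mathrm{B}}J$. The final sentence of the statement is then immediate: one chooses $E = \{z\}$ with $z$ chosen so that Assumption (A) holds (possible whenever $J \not\subseteq P(T)$, as discussed after Theorem \ref{thm:A}), and the displayed quantity $h_z$ and the conclusion no longer reference any auxiliary $E$.

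The only genuinely delicate point is the overlap between the two regimes treated separately inside the proof of Lemma \ref{lem4.3} — preimages at a common level $k$ versus preimages at different levels — but that bookkeeping is already absorbed into the statement of Lemma \ref{lem4.3}, so here it costs nothing. The main obstacle in writing this proof cleanly is therefore just the harmless issue of the terms with small derivative: I would handle it by the remark above (finiteness via Lemma \ref{lem4.3} with $n=0$ plus the fact that $z \notin J$ forces derivatives to grow), or, even more cheaply, by noting that for the purpose of computing the exponent of convergence $h_z$ one may discard any bounded initial segment of indices $k$, and for $k$ beyond a fixed threshold every backward branch of a point outside $P(T)$ is uniformly expanding. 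Either route is routine; the dyadic pigeonholing against $N_{2^{-n}}(J)$ is the real content and it is already done.
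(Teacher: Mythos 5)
Your main argument --- dyadic pigeonholing of the pairs $(k,x)$ according to $2^{n}\le|(T^{k})'(x)|\le 2^{n+1}$, bounding each block by $N_{2^{-n}}(J)\lesssim 2^{nt}$ via Lemma \ref{lem4.3}, and summing the geometric series --- is exactly the paper's proof, and you correctly isolate the one remaining issue, namely the terms with $|(T^{k})'(x)|\le 1$.

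It is precisely there that your justification goes astray. Lemma \ref{lem4.1} runs in the wrong direction for this purpose: it converts a \emph{large} derivative $|(T^{k})'(x)|\ge 1/r$ into proximity of $x$ to $J$, so when $|(T^{k})'(x)|\le 1$ it yields only the vacuous statement that $x$ lies within a bounded distance of $J$; it cannot show that derivatives along backward orbits of $z\notin J$ must grow. Likewise, Lemma \ref{lem4.3} with $n=0$ counts only the pairs with derivative in $[1,2]$, not in $(0,1]$. Your alternative route --- uniform expansion of all backward branches of a point off $P(T)$ for large $k$ --- is the right statement, but it is not available ``off the shelf'' here, since the theorem assumes no hyperbolicity; it has to be derived. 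The paper does this from Lemma \ref{bimpliesa} together with the fact that $J$ has empty interior: by Lemma \ref{bimpliesa} the preimages $T^{-k}(\overline{B(z,\kappa)})$ are eventually contained in any prescribed neighbourhood $V$ of $J$, while by the Koebe one-quarter theorem (Lemma \ref{pros4.4}) an inverse branch with $|(T^{k})'(x)|\le 1$ would map $B(z,\kappa)$ onto a set containing a ball of radius $\kappa/4$, which cannot fit inside a sufficiently small neighbourhood of the compact, empty-interior set $J$. Hence only finitely many levels $k$ admit a preimage with derivative at most $1$, and since each such level contributes at most $d^{k}$ terms, each finite because $|(T^{k})'(x)|>0$ (as $z$ avoids the critical values of $T^{k}$), the small-derivative part of the series is bounded. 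With this substitution your argument is complete and coincides with the paper's.
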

\begin{proof}
Let $ s>t>\overline{\dim}_{\mathrm{B}}J $. First note that 
\[
\#\{k : |(T^{k})'(x)|\leq 1 \text{ for some $x\in T^{-k}(z)$} \} \lesssim 1
\]
by Lemma \ref{bimpliesa} and the fact that $J_T$ has empty interior. (Indeed, a ball must shrink if it is to be contained in a small neighbourhood of $J$.)  Recall that we assume throughout that $J_T \neq \mathbb{C}_\infty$ and it is known that in this case  $J_T$ has empty interior \cite[Theorem 4.2.3]{ref1}. Moreover,  since $ z \in E$, $|(T^{k})'(x)| > 0$ for all $k$ and $x\in T^{-k}(z)$.  In particular, 
\[
\sum_{k=1}^{\infty}\sum_{\substack{x:~x\in T^{-k}(z) \\  |(T^{k})'(x)|\leq 1}}|(T^{k})'(x)|^{-s} \lesssim 1.
\]
Therefore
\begin{align*}
	\sum_{k=1 }^{\infty}\sum_{x\in T^{-k}(z)}|(T^{k})'(x)|^{-s}
	&\lesssim 1 \ + \ \sum_{n=0}^{\infty} \sum_{k=1}^{\infty}\sum_{\substack{x:~x\in T^{-k}(z) \\ 2^{n}\leq |(T^{k})'(x)|\leq 2^{n+1}}}|(T^{k})'(x)|^{-s}\\
	&\approx \sum_{n=0}^{\infty} 2^{-ns} \sum_{k=1}^{\infty}\sum_{\substack{x:~x\in T^{-k}(z) \\ 2^{n}\leq |(T^{k})'(x)|\leq 2^{n+1}}}1\\
	&\lesssim\sum_{n=0}^{\infty} 2^{-ns}N_{2^{-n}}(J)~~\left( \textup{by Lemma } \ref{lem4.3}\right) \\
	&\lesssim\sum_{n=0}^{\infty} 2^{-ns}2^{nt}~~~\left( \textup{using }t>\overline{\dim}_{\mathrm{B}}J \right)\\
	&=\sum_{n=0}^{\infty} 2^{-n(s-t)}<\infty.
\end{align*}	
It follows that $ h_{z}\leq s$ and therefore $ h_{z}\leq\overline{\dim}_{\mathrm{B}}J $, as required.
\end{proof}

\subsection{Pulling it together: proof of Theorem \ref{thm:A}.}\label{proof:1}

	First, since $ E\subseteq O_{T}(E)$ and $J\subseteq \overline{O_{T}(E)} $, by the monotonicity and stability under closure of the box dimension, it follows that 
	$$\overline{\dim}_{\mathrm{B}}O_{T}(E) \geq\max\left\lbrace\overline{\dim}_{\mathrm{B}}E, ~\overline{\dim}_{\mathrm{B}}J \right\rbrace ,$$
	and it remains to prove the reverse inequality.   Let $ s>\max\left\lbrace\overline{\dim}_{\mathrm{B}}E, ~\overline{\dim}_{\mathrm{B}}J \right\rbrace, $ and fix $ \delta \in \left( 0,1\right).$ Then
	\begin{align*}
	N_{\delta}\left( \bigcup_{n=0}^{\infty}T^{-n}(E)\right) &= N_{\delta}\left( \bigcup_{n=0}^{\infty}\bigcup_{j=1}^{d^n} S_{n,j}(E)\right)\\
	&\leq N_{\delta}\left( \bigcup_{n,j:~S_{n,j}(E) \subseteq J_{\delta}}S_{n,j}(E)\right)+ N_{\delta}\left( \bigcup_{n,j:~S_{n,j}(E)\nsubseteq J_{\delta}}S_{n,j}(E)\right).
	\end{align*}
	Since 
	$$\bigcup_{n,j:~S_{n,j}(E) \subseteq J_{\delta}}S_{n,j}(E)\subseteq J_{\delta},$$
	we know
	$$N_{\delta}\left( \bigcup_{n,j:~S_{n,j}(E) \subseteq J_{\delta}}S_{n,j}(E)\right)\lesssim \delta^{-s} ,$$
	and so it remains to estimate the other term.
	
	We may assume without loss of generality that $E \nsubseteq J$ and therefore choose  $ z\in  E \setminus J $. Write $z_{n,j} = S_{n,j}(z) \in T^{-n}(z)$ for integers $n \geq 0$ and $j=1, \dots, d^n$.  Note that 
	\begin{equation}
		T^{n}( z_{n,j}) = z \in E\setminus J  \quad \text{and} \quad |(T^{n})'( z_{n,j})| \approx \frac{1}{|S'_{n,j}(E)|}.
		\tag{3.7}\label{eq:main3.7}
	\end{equation}
	Then
	\begin{align*}
	N_{\delta}\left( \bigcup_{n,j:~S_{n,j}(E)\nsubseteq J_{\delta}}S_{n,j}(E)\right) &\leq 		\sum_{n,j:~S_{n,j}(E)\nsubseteq J_{\delta}}N_{\delta}\left(S_{n,j}(E) \right)\\
		&= \sum_{n=0}^\infty \sum_{j:S_{n,j}(E)\nsubseteq J_{\delta}} N_{\delta}\left(S_{n,j}(E)\right)\\
		& \lesssim \sum_{n=0}^\infty \sum_{j:S_{n,j}(E)\nsubseteq J_{\delta}}  N_{\frac{\delta}{ |S'_{n,j} ( E) | }}(  E) ~~\left( \text{by Lemma }\ref{pros4.1}\right) \\
		&\lesssim\sum_{n=0}^\infty \sum_{j:S_{n,j}(E)\nsubseteq J_{\delta}}  \left(\frac{\delta}{ |S'_{n,j} ( E) | } \right) ^{-s}~~\left( \text{using }s>\overline{\dim}_{\mathrm{B}}E \text{ and Lemma } \ref{pros4.2}\right) \\
		&	\approx \delta^{-s} \sum_{n=0}^\infty \sum_{j:S_{n,j}(E)\nsubseteq J_{\delta}}  \left( |( T^{n}) '( z_{n,j}) |\right)^{-s}~~\left( \text{by }\eqref{eq:main3.7}\right)  \\ 
		&\leq \delta^{-s}\sum_{n=0}^{\infty}\sum_{j=1}^{d^{n}} \left( |( T^{n}) '( z_{n,j}) |\right)^{-s}\\
		&\lesssim\delta^{-s} ~~\left( \text{using } s>\overline{\dim}_{\mathrm{B}}J\text{ and Proposition }\ref{pros4.3} \right). 
	\end{align*}
	Consequently,
	$$N_{\delta}\left( \bigcup_{n=0}^{\infty}T^{-n}(E)\right)\lesssim \delta^{-s}$$
	and this  implies
	$$\overline{\dim}_{\mathrm{B}}O_{T}(E) \leq\max\left\lbrace\overline{\dim}_{\mathrm{B}}E, \overline{\dim}_{\mathrm{B}}J \right\rbrace ,$$
	completing the proof.

\subsection{Some counterexamples: proof of Proposition \ref{thm:B}}\label{proof:3}

	\textbf{Proof of (i)}. 
	Let $E= \left[ 0,1\right]$ and $T$ be defined by $T( z) =z^{2}$. In this case, $ 0$ is a critical point, an attracting fixed point, and an exceptional point of $ T$, and the Julia set of $ T $ is the unit circle $ S^{1} .$ For any point $ w \in \left\lbrace z:|z|<1\right\rbrace  $, 0 is the accumulation point of the sequence $ \left\lbrace T^{n}(w):n\geq1\right\rbrace . $ Therefore,   assumption (A) from Theorem \ref{thm:A} fails to hold and also the conclusion of Lemma \ref{bimpliesa} does not hold. 	We shall prove that the orbital set of $ E $ is dense in the unit disk $ \mathbb{D}^{2} $ from which is follows that
	$$\overline{\text{dim}}_{\text{B}}O_{T}(E)= \dim_{\text{H}}\overline{O_{T}(E)}=2.$$ However, 
	$$\overline{\dim}_{\mathrm{B}}E=\dim_{\mathrm{H}}J =\overline{\dim}_{\mathrm{B}}J = 1 $$
	and so the conclusion of Theorem \ref{thm:A} is false. 	Let $$ A=\left\lbrace z:z=re^{2\pi i\frac{q}{2^{m}}}, ~q,~m\in \mathbb{N},~0\leq r \leq 1\right\rbrace  .$$ It is clear that $ A $ is dense in the unit disk $ \mathbb{D}^{2} $. For any point $ z=re^{2\pi i\frac{q}{2^{m}}} \in A$, 
	$$ T^{m}( z) =r^{2^{m}}\in E ,$$
	which implies $z \in O_{T}(E)   .$ Therefore, $ A\subseteq O_{T}(E) $, and so $ O_{T}(E) $ is dense in the unit disk, as required.\\
	\textbf{Proof of (ii).} Let $ E=\left\lbrace  6+\frac{1}{n} \right\rbrace _{n\in \mathbb{N}} \cup \left\lbrace  6\right\rbrace $ and $T$ be defined by $ T(z)=z^{2}+6 $.  It is clear that 6 is a critical value of $ T $ and $ \infty $ is the unique attracting fixed point, and for any $ w\in F $, the sequence $ \left\lbrace T^{n}(w):n\geq1 \right\rbrace  $ tends to $ \infty $. Therefore,   we may indeed apply Lemma \ref{lem:1} to conclude that the pre-images of $E$ do accumulate on $J$.  However, assumption (A) from  Theorem \ref{thm:A}   does not hold.
	 It is easy to see, e.g. recall \cite[Example 2.7]{ref2}, that
	$$\overline{\dim}_{\mathrm{B}}E=\dfrac{1}{2}.$$
	Moreover,  for the rational map $T_c$ defined by $ T_{c} =z^{2}+c$ for  $ c\in\mathbb{C} $, 
	$$\overline{\dim}_{\mathrm{B}}J_{T_{c}}=\dim_{\mathrm{B}}J_{T_{c}} \leq \dfrac{2\log 2}{\log (4(|c|-\sqrt{2|c|}))}$$
	by \cite[Theorem 14.15]{ref2} and so we have 
	\[
	 \overline{\dim}_{\mathrm{B}}J_{T_{6}}<\frac{2}{3}
	 \]
	  in the case $c=6$, as required.
	
	Turning our attention to the orbital set, observe that $ T^{-1}(E)=\left\lbrace  \pm\sqrt{\frac{1}{n}} \right\rbrace _{n\in \mathbb{N}} \cup \left\lbrace  0\right\rbrace.$ In particular, an easy calculation gives 
 $$\overline{\text{dim}}_{\text{B}}T^{-1}(E)=\dfrac{2}{3}.$$
	Observe that the assumptions from Theorem \ref{thm:A} are satisfied for $T$ with $E$ replaced by $T^{-1}(E)$ and so  it follows from Theorem \ref{thm:A} (with $E$ replaced by $T^{-1}(E)$)   that 
	$$\overline{\text{dim}}_{\text{B}}O_{T}(E) =\dfrac{2}{3}$$
	where we use that $$O_T(E) = O_T(T^{-1}(E)) \cup E.$$
	 In particular, the conclusion of Theorem \ref{thm:A} fails for $O_{T}(E)$.\\
	 \textbf{Proof of (iii).} This example is similar to the previous one and so we only sketch it.  Let $\varepsilon>0$ and $p>0$ and let $T$ be defined by $T(z) = z^N + c$ where $p>0$  and $c>0$ are large and  depend on $\varepsilon$ and  $N \geq 1$ is a large integer depending on $p$.     Let $ E=\left\lbrace  c+\frac{1}{n^p} + \frac{i}{m^p} \right\rbrace _{n,m\in \mathbb{N}} \cup \left\lbrace  c\right\rbrace $.  We choose $p$ sufficiently large to ensure 
	 \[
	 \overline{\dim}_{\mathrm{B}} E = \frac{2}{1+p} < \varepsilon
	 \]
	 and $c$ is chosen sufficiently large to ensure that
	  \[
	 \overline{\dim}_{\mathrm{B}} J   < \varepsilon.
	 \]
	 We can do this by following the approach of \cite[Theorem 14.15]{ref2} where, for large enough $c$, $J$ is realised as an attractor of a conformal IFS. Then we see that
	 \[
	  T^{-1}(E)\supseteq \left\lbrace  \sqrt[N]{\frac{1}{n^{p}} + \frac{i}{m^{p}}} \right\rbrace _{n,m\in \mathbb{N}}  \cup \left\lbrace  0\right\rbrace
	  \]
	  and an easy calculation gives
	  \[
	  \overline{\dim}_{\mathrm{B}}  O_T(E) \geq \frac{2}{1+p/N} > 2- \varepsilon
	  \]
	  for sufficiently large $N=N(p)$.  This completes the proof. \\
\textbf{Proof of (iv)}. We give the proof in the Siegel disk case, the Herman ring case being similar. Recall that  $T$ admits a Siegel disk at $z_0 \in \mathbb{C}$ if there exists a  neighbourhood   $V$ of $0$ and a  homeomorphism $\phi $ mapping $V$ to $B(0,1)$ such that 
\[
\phi T^p \phi^{-1} (z) = e^{2\pi i \alpha} z
\]
for some period $p \geq 1$ and  some irrational $\alpha$.  That is, the dynamics of $T^p$ are conjugate to an irrational rotation near 0.  Choosing $E = \phi^{-1}([0,1/2])$ (or $E = \phi^{-1}([1/3,1/2])$ if one wants $E$ to avoid the fixed point) we get
\[
\overline{O_T(E)} \supseteq \phi^{-1} \{ e^{2\pi i \theta} z : \theta \in [0,1], z \in E\}
\]
has non-empty interior and therefore 
$$\overline{\text{dim}}_{\text{B}}O_{T}(E)=2$$
as required.  Moreover, by \cite[Theorem 4.1]{mcmullendisks}, $\overline{\text{dim}}_{\text{B}} J_T <2$ when $T$ is given by $T(z) = e^{2 \pi i \alpha} z + z^2$ with $\alpha$ an irrational number of bounded type.  In particular, such $T$ admit a Siegel disk at 0. Finally,  such $E$ do not intersect  $P(T)$ and therefore assumption (A) would be satisfied if it was weakened to allow $U \cap J = \emptyset$.

\subsection{Hausdorff dimension: proof of Proposition \ref{thm: C}.}\label{proof:2}

	By \cite[Theorem 2.3.1]{ref1}, any rational map satisfies the Lipschitz condition on the Riemann sphere with respect to the spherical metric, that is, 
	$$\sigma_{0}\left(Tz,Tw\right) \leq M\sigma_{0}\left(z,w\right),$$
	where $\sigma_{0}  $ is the spherical metric and $ M>0 $ is the supremum of the ratio
	$$\dfrac{|T'( z) |\left( 1+|z|^{2}\right) }{1+|T( z) |^{2}}.$$
	
	If $ E $ does not contain any critical values of the map $ T $, then by the inverse function theorem and the Lipschitz condition described above, any inverse branch $ S_{j} $ of $ T^{-1} $ defined on $ E $ is holomorphic with uniformly bounded derivative, that is, $  |S'_{j}( z)|  $ is uniformly bounded for all $ z\in E. $ In this case, it is easy to see
	$$\dim_{\mathrm{H}}(T^{-1}(E))=\dim_{\mathrm{H}}E.$$
	
	On the other hand, suppose   $ E $ does  contain critical values of $ T $. Note that the set of critical values  $T(\mathrm{Crit}( T))$ is finite. Fix $ n\in \mathbb{N} $, and let $B_{n}= T(\mathrm{Crit}( T))_{1/n}$ be the $1/n$-neighbourhood of the set of critical values and assume $n$ is large enough such that the complement of $B_n$ is simply connected.  We can decompose $ E\setminus B_{n}$ into two parts $ E_{n,1} $ and $ E_{n,2} $ such that $ E_{n,j} $ is contained in a simply connected domain $ U_{n,j}$ $(j=1,2) $ and each $ U_{n,j} $ does not contain critical values of $ T $. It follows that every branch of $ T^{-1} $ is univalent and analytic on $ U_{n,j} $. 	Then   $$ \dim_{\mathrm{H}}(T^{-1}(E_{n,j}))=\dim_{\mathrm{H}}E_{n,j} ~~~(j=1,2).$$
	It follows that
	\begin{align*}
		\dim_{\mathrm{H}}(T^{-1}(E\setminus B_{n}))&=\max\left\lbrace \dim_{\mathrm{H}}(T^{-1}(E_{n,1})),\dim_{\mathrm{H}}(T^{-1}(E_{n,2}))\right\rbrace\\
		&=\max\left\lbrace \dim_{\mathrm{H}}E_{n,1},\dim_{\mathrm{H}}E_{n,2}\right\rbrace\\
		&=\dim_{\mathrm{H}}(E\setminus B_{n}).
	\end{align*}
Thus, 
	\begin{align*}
		\dim_{\mathrm{H}}T^{-1}( E)
		&=\dim_{\mathrm{H}}T^{-1}\big( E\setminus T(\mathrm{Crit}( T)) \big)\\
		&=\sup_{n}\left\lbrace \dim_{\mathrm{H}}T^{-1}( E\setminus B_{n})\right\rbrace\\
		&	=\sup_{n}\left\lbrace \dim_{\mathrm{H}}( E\setminus B_{n})\right\rbrace\\
		&	=\dim_{\mathrm{H}}\big( E\setminus T( \mathrm{Crit}( T))\big)\\
		&	=\dim_{\mathrm{H}}E.
	\end{align*} 
	By the countable stability of Hausdorff dimension, we get
	$$\dim_{\mathrm{H}}O_{T}(E)=\dim_{\mathrm{H}}E,$$
	completing the proof.

\end{document}